\newtheorem{theorem}{Theorem}
\newtheorem*{theorem*}{Theorem}
\newtheorem{proposition}{Proposition}
\newtheorem{lemma}{Lemma}
\newtheorem{corollary}{Corollary}
\theoremstyle{remark}
\newtheorem{remark}{Remark}
\newcommand{\C}{\mathbb{C}}
\newcommand{\D}{\Omega}
\newcommand{\ep}{\varepsilon}
\newcommand{\Dc}{\overline{\Omega}}
\newcommand{\dbar}{\overline{\partial}}
\title{Compactness of Hankel Operators with Continuous Symbols}
\author{Timothy G. Clos}
\author{S\"{o}nmez \c{S}ahuto\u{g}lu}
\email{Timothy.Clos@rockets.utoledo.edu; Sonmez.Sahutoglu@utoledo.edu}
\address{University of Toledo, Department of Mathematics \& Statistics, 
Toledo, OH 43606, USA}
\subjclass[2010]{Primary 47B35; Secondary 32W05}
\keywords{Hankel operator, Reinhardt, compact, convex}
\date{\today}
\begin{document}

\begin{abstract}
Let $\Omega$ be a bounded convex Reinhardt domain in $\mathbb{C}^2$ and 
$\phi\in C(\overline{\Omega})$. We show that the Hankel operator $H_{\phi}$ is 
compact if and only if $\phi$ is holomorphic along every non-trivial analytic 
disc in the boundary of $\Omega$.
\end{abstract}
%%%%%%%%%%%%%%%%%%%%%%%%%
%%%%%%%%%%%%%%%%%%%%%%%%%%
\maketitle

Let $\D$ be a domain in $\C^n$ and  let $L^2(\D)$ and $A^2(\D)$ denote 
square integrable functions on $\D$ and  the Bergman space on $\D$ 
(the set of square integrable holomorphic functions on $\D$), respectively. 
Since $A^2(\D)$ is a closed subspace in $L^2(\D)$ the Bergman projection 
$P:L^2(\D)\to A^2(\D)$, the orthogonal projection, exists. Furthermore, 
let $H_{\phi}f=(I-P)(\phi f)$ for all $f\in A^2(\D)$ and $\phi\in L^{\infty}(\D)$.  
We note that $H_{\phi}$ is called the Hankel operator with symbol $\phi$. 
We refer the reader to \cite{PellerBook,ZhuBook} and references there in 
for more information on these operators.

Hankel operators form an active research area in operator theory. Our interest 
lies in their compactness properties in relation to the behavior of the symbols 
on the boundary of the domain. On the unit disc $\mathbb{D}$ in $\C$ Axler 
(\cite{Axler86}) showed that, for $f$ holomorphic on the unit disc $\mathbb{D}$,   
the Hankel operator $H_{\overline{f}}$ is compact on $A^2(\mathbb{D})$ if and 
only if  $f$ is in the little Bloch space (that is, $(1-|z|^2)|f'(z)|\to 0$ as $|z|\to 1$). 
This result has been extended into higher dimensions by Peloso (\cite{Peloso94}) in 
case the domain is smooth bounded and strongly pseudoconvex. The same year, 
Li (\cite{Li94}) characterized bounded and compact Hankel operators on 
strongly pseudoconvex domains for symbols that are square integrable only. 
Recently, \v{C}u\v{c}kovi\'c and the second author 
\cite[Theorem 3]{CuckovicSahutoglu09} gave a characterization for 
compactness of Hankel operators on smooth bounded convex 
domains in $\C^2$ with symbols smooth up to the boundary. 
We note that even though they stated their result for smooth 
domains and smooth symbols on the closure, examination of the 
proof shows that $C^1$-smoothness of the domain and the symbol 
is sufficient. They proved the following theorem.

\begin{theorem*}[\v{C}u\v{c}kovi\'c-\c{S}ahuto\u{g}lu]
Let $\D$ be a $C^1$-smooth bounded convex domain in $\C^2$ and $\phi\in C^1(\Dc)$. 
Then  the Hankel operator $H_{\phi}$ is compact on $A^2(\D)$ if and only if  
$\phi\circ f$ is holomorphic for any holomorphic function $f:\mathbb{D}\rightarrow b\D$.
\end{theorem*}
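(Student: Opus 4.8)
The plan is to prove the two implications separately, recasting the Hankel operator through the $\dbar$-Neumann machinery and exploiting the rigidity of analytic discs in the boundary of a convex domain. First I would record a geometric reduction. On a bounded convex domain every non-trivial holomorphic map $f:\disc\to b\D$ has image contained in an affine complex subspace lying in $b\D$, since the boundary of a convex domain carries no curved analytic structure; in $\C^2$ this is a one-dimensional affine subspace. After a complex-affine change of coordinates $w=Az+b$ (which, having constant Jacobian, induces a unitary multiple of a composition operator on $A^2(\D)$, transforms $P$ covariantly, and hence conjugates $H_\phi$ to $H_{\phi\circ(\,\cdot\,)^{-1}}$, preserving compactness) I may assume the disc lies in the $z_1$-direction inside a supporting hyperplane $\{z_2=0\}$, with $\D$ locally on one side. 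The hypothesis ``$\phi\circ f$ holomorphic'' then says precisely that $\partial(\phi\circ f)/\partial\overline{z_1}\equiv 0$ along the disc, i.e. the $d\overline{z_1}$-component of $\dbar\phi$ (the part tangent to the disc) vanishes there.

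Next comes the operator-theoretic reformulation. For holomorphic $f$ we have $\dbar(\phi f)=f\,\dbar\phi$, and since $H_\phi f=(I-P)(\phi f)$ is orthogonal to $A^2(\D)$ while $\dbar\bigl[(I-P)(\phi f)\bigr]=f\,\dbar\phi$, the function $H_\phi f$ is exactly the canonical ($L^2$-minimal) solution of $\dbar v=f\,\dbar\phi$. Thus $H_\phi f=\dbar^{\ast}N\bigl(f\,\dbar\phi\bigr)$ on $A^2(\D)$, where $N$ is the $\dbar$-Neumann operator on $(0,1)$-forms, and compactness of $H_\phi$ is governed by how the multiplier $\dbar\phi$ interacts with $N$ near $b\D$.

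For sufficiency, assume $\phi$ is holomorphic along every boundary disc and prove compactness by localizing with a finite partition of unity near $b\D$ together with the (trivially compact) interior piece. At a boundary point lying on no analytic disc, convexity forces a compactness (Property-$(P)$ type) estimate for $N$, so the merely $C^1$ symbol contributes a locally compact piece of $\dbar^{\ast}N(\,\cdot\,\dbar\phi)$. At a point on an analytic disc, the hypothesis annihilates the disc-tangential component of $\dbar\phi$, while the surviving transverse component is harmless because $\D$ is flat along the disc but thin transverse to it; the corresponding localized piece is again compact. Summing the finitely many pieces yields compactness of $H_\phi$.

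For necessity, which I expect to be the main obstacle, I argue by contraposition: suppose some disc $f:\disc\to b\D$ has $\phi\circ f$ non-holomorphic, so $\partial(\phi\circ f)/\partial\overline{z_1}\not\equiv 0$ at an interior point of the disc. In the normalized coordinates I would construct a normalized sequence $g_j\in A^2(\D)$ converging weakly to $0$ with $\|H_\phi g_j\|$ bounded below, contradicting compactness. The $g_j$ are designed to concentrate transverse to the disc (a peaking family in the $z_2$-direction modeled on the Bergman kernel of the thin transverse geometry) while spreading holomorphically along the disc in $z_1$; concentration at the boundary gives the weak-null property. Testing $H_\phi g_j=(I-P)(\phi g_j)$ against a matching unit family in $A^2(\D)^{\perp}$ reduces the lower bound, via the $\dbar$-reformulation, to the non-vanishing integral of $\partial(\phi\circ f)/\partial\overline{z_1}$ against $|g_j|^2$ over the disc. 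The delicate part is producing these test functions with the correct concentration rate from a merely $C^1$ (non-smooth) boundary and verifying simultaneously the weak-null property and the uniform lower bound; this is precisely where the convex geometry and sharp estimates on the size of the Bergman kernel near the disc must be invoked.
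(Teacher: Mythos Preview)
This theorem is not proved in the present paper; it is quoted from \cite{CuckovicSahutoglu09} as background, with the remark that examination of the original proof shows $C^1$-smoothness of the domain and symbol suffices. The paper's own result is Theorem~\ref{Thm1}, which treats bounded convex \emph{Reinhardt} domains (no boundary regularity assumed) with symbols merely in $C(\Dc)$. There is therefore no proof here against which to compare your proposal.

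For what it is worth, your outline does follow the broad architecture of \cite{CuckovicSahutoglu09}: the Kohn formula $H_\phi f=\dbar^{\ast}N(f\,\dbar\phi)$, localization near $b\D$, Fu--Straube compactness of $N$ on convex pieces containing no boundary discs, and vanishing of the disc-tangential part of $\dbar\phi$ under the hypothesis. But two steps are not yet arguments. First, ``the surviving transverse component is harmless'' is precisely the crux of the sufficiency direction; in the original one modifies $\phi$ near each disc so that $\dbar$ of the modified symbol vanishes \emph{on} the disc (compare the construction of $\phi_1,\phi_2$ in Lemma~\ref{Lem8} for the Reinhardt case) and then invokes a genuine compactness estimate for the remainder---your sentence does not indicate any mechanism. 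Second, in the necessity direction you describe what the test sequence $g_j$ should do but not how to build it or why the lower bound survives the projection; the present paper handles the analogous step for Reinhardt domains via the explicit slice decomposition and Lemmas~\ref{Lem3}--\ref{Lem5}, machinery unavailable for a general $C^1$ convex domain, and your sketch supplies no substitute.
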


In this paper we prove a similar result with symbols that are only continuous up to 
the boundary. 	The first result in this direction was proven by Le in \cite{Le10}. 
He showed that for $\D=\mathbb{D}^n$, the polydisc in $\C^n$, and 
$\phi\in C(\Dc)$, the Hankel operator $H_{\phi}$ is compact on $A^2(\D)$ 
if and only if $\phi=f+g$ where $f$ and $g$ are continuous on $\Dc,f=0$ 
on $b\D$, and $g$ is holomorphic on $\D$. 
We prove the following theorem, generalizing Le's result in $\C^2$.  

\begin{theorem}\label{Thm1} 
Let $\Omega$ be a bounded convex Reinhardt domain in $\mathbb{C}^2$ and 
$\phi\in C(\overline{\Omega})$.  Then the Hankel operator $H_{\phi}$ is 
compact on $A^2(\Omega)$ if and only if $\phi\circ f$ is 
holomorphic for any holomorphic function $f:\mathbb{D}\rightarrow b\D$.
\end{theorem}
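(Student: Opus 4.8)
The proof has a geometric normalization, a routine necessity direction, and a substantial sufficiency direction.

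\emph{Geometry.} Writing a bounded convex Reinhardt domain as $\D=\{(z_1,z_2):(|z_1|,|z_2|)\in\omega\}$, convexity forces the base $\omega\subset[0,\infty)^2$ to be convex and ``decreasing'' and $\D$ to be a complete Reinhardt domain; in particular the monomials $\{z_1^jz_2^k\}_{j,k\ge0}$ form an orthogonal basis of $A^2(\D)$. A subharmonicity argument—if $f:\disc\to b\D$ is holomorphic and nonconstant then $\alpha|f_1|+\beta|f_2|$ cannot be harmonic unless one of $|f_1|,|f_2|$ is constant—shows that every nontrivial analytic disc in $b\D$ is affine, parallel to a coordinate axis, and lies over one of the two ``faces'' $\{|z_1|=\rho_1\}\cap\Dc$ or $\{|z_2|=\rho_2\}\cap\Dc$, where $\rho_i=\sup_\D|z_i|$; moreover, over the relative interior of such a face, $\D$ coincides near each of its points with a bidisc. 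Hence the hypothesis on $\phi$ is equivalent to: $z_2\mapsto\phi(z_1,z_2)$ is holomorphic whenever $|z_1|=\rho_1$, and symmetrically for $z_2$. Let $\mathcal A\subset b\D$ be the union of the closures of all such discs.

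\emph{Necessity.} Suppose $H_\phi$ is compact but, say, $z_2\mapsto\phi(z_1^0,z_2)$ fails to be holomorphic for some $|z_1^0|=\rho_1$, so $\eta:=(I-P_{z_2})\big(\phi(z_1^0,\cdot)\big)\neq0$, and by continuity $(I-P_{z_2})(\phi(w,\cdot))\to\eta$ as $w\to z_1^0$. Using the local bidisc structure near this face I would take $g_n=\chi\,(k_{w_n}\otimes 1)$, where $k_{w_n}$ is the normalized Bergman kernel of the relevant $z_1$-disc at $w_n\to z_1^0$ and $\chi$ is a cutoff, verify that $g_n\rightharpoonup 0$ in $A^2(\D)$, and bound $\|H_\phi g_n\|$ below by a fixed multiple of $\|\eta\|>0$, using that in the product $H_\phi(k_{w_n}\otimes 1)$ contains the non-vanishing term $k_{w_n}\otimes(I-P_{z_2})(\phi(w_n,\cdot))$. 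This contradicts compactness, so $\phi$ must be holomorphic along every boundary disc.

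\emph{Sufficiency.} Assume $\phi$ is holomorphic along every analytic disc in $b\D$. The plan is to localize compactness of $H_\phi$ to neighborhoods of boundary points. Near $p\in b\D\setminus\mathcal A$, a compact boundary neighborhood of $p$ carries no analytic disc, hence satisfies Catlin's property $(P)$ (valid for convex domains, by Fu and Straube); this yields compactness of the $\dbar$-Neumann operator on data localized there and, after uniformly approximating $\phi$ by smooth symbols and using $\|H_\psi\|\le\|\psi\|_{L^\infty(\D)}$, compactness of the corresponding part of $H_\phi$. Near $p\in\mathcal A$, say on a $z_2$-direction disc, $\D$ agrees locally with a bidisc $\disc_{\rho_1}\times\disc_{c_1}$; there $\phi$ is holomorphic in $z_2$, the Bergman projection factors as $P_{z_1}\otimes P_{z_2}$, and one is led to the one-variable Hankel operators on $A^2(\disc_{\rho_1})$ built from the Taylor coefficients in $z_2$ of $\phi$ restricted to the face. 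Since the disc algebra lies in the little Bloch space, every symbol continuous up to the boundary on a disc has compact Hankel operator (which also follows from Axler's theorem \cite{Axler86}); a uniform control of the Taylor tail near a face—obtained from continuity of $\phi$ up to $b\D$ via a Stone--Weierstrass approximation of $\phi$ on the face by finite sums of polynomials in $z_1,\bar z_1$ times polynomials in $z_2$—then gives compactness of this part of $H_\phi$. Combining the pieces proves $H_\phi$ compact.

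\emph{Main obstacle.} I expect the sufficiency direction near the analytic discs to be the crux. The reduction to a one-variable problem must respect the holomorphy-in-$z_2$ hypothesis, yet any cutoff depending on $z_2$ destroys that holomorphy; so the localization, the commutator errors $[P,\chi]$ it produces, the treatment of the edge where a face meets the curved part of $b\D$, and the passage between $A^2(\D)$ and $A^2$ of the bidisc model all have to be arranged with care, and this—together with the uniform estimate for the Taylor tail near a face—is where the real work lies. The property $(P)$ input off the discs and the classification of boundary discs are, by comparison, standard.
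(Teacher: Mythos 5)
Your geometric normalization is sound and matches the paper's Lemma about boundary discs, but both directions of your argument have genuine gaps at exactly the points where the real work lies. In the necessity direction, the test functions $g_n=\chi\,(k_{w_n}\otimes 1)$ are not holomorphic once you multiply by a cutoff $\chi$, so they are not in $A^2(\D)$ and $H_\phi g_n$ is not defined; and even if you drop the cutoff (the functions $k_{w_n}\otimes 1$ are in $A^2(\D)$ and do tend weakly to zero), the lower bound you invoke -- that $H_\phi(k_{w_n}\otimes1)$ ``contains the term $k_{w_n}\otimes(I-P_{z_2})(\phi(w_n,\cdot))$'' -- is a bidisc computation: the Bergman projection of $\D$ does not factor as $P_{z_1}\otimes P_{z_2}$, and $\D$ only agrees with a product locally, so this term cannot simply be isolated. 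This is precisely what the paper has to work for: it uses functions of the base variable alone peaking at the boundary point, an exact identity expressing $\|H_{\phi_0}g\|^2$ as a slice integral $\int|g(w)|^2\|H^{\Delta_w}_{\phi_0}(1)\|^2\,dV$ plus a cross term, a separate lemma showing the cross term vanishes along the sequence, a continuity statement for the slice Hankel norms as $w$ approaches the face, and a lemma reducing from $\phi$ to the symbol frozen at the face. None of these steps is supplied or replaced in your sketch, and they do not follow from the local product structure alone.

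In the sufficiency direction you have correctly identified the obstruction (any cutoff in the $z_2$ variable destroys the holomorphy hypothesis, the commutators $[P,\chi]$ are not small, and the edge where a face meets the strictly convex part of $b\D$ must be handled), but you leave it unresolved, so this half is a plan rather than a proof. It is worth knowing that the paper avoids a direct local bidisc analysis near the faces altogether: it (i) approximates $\phi$ uniformly on the closure $\Gamma_{\D}$ of the union of boundary discs by symbols $\psi_n\in C^\infty(\Dc)$ that are still holomorphic along every boundary disc, by convolving along each face and patching with a partition of unity supported away from the other face; (ii) proves compactness of $H_{\psi_n}$ for such smooth symbols by explicitly modifying $\psi_n$ so that $\dbar$ of the modification vanishes on $\Gamma_{\D}$ and then quoting the $C^1$ convex-domain result of \v{C}u\v{c}kovi\'c--\c{S}ahuto\u{g}lu; and (iii) proves an essential-norm estimate: if a symbol is small on $\Gamma_{\D}$, one splits it into a uniformly small piece plus a piece vanishing near $\Gamma_{\D}$, and for the latter the local Hankel operators are zero near the discs and compact elsewhere (Fu--Straube compactness of the $\dbar$-Neumann operator on convex pieces with no boundary discs), glued by a localization proposition. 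The product-of-two-discs case is treated separately with Fej\'er means and Mergelyan's theorem, not via Axler's little Bloch criterion. So your Stone--Weierstrass/Taylor-tail route near the faces would have to be built essentially from scratch, and as written the proposal does not close either direction.
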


We note that in the theorem above there is no regularity restriction 
on the domain, but the class of domains is smaller than the one considered 
in \cite{CuckovicSahutoglu09}. It would be interesting to know if the same 
result is still true on convex domains in $\C^n$. 

%%%%%%%%%%%%%%%%%%%%%%%%%%%%%%%%%%%%%%%%%
\section*{Proof of Theorem \ref{Thm1}}
 Let us start by some notation. We denote 
 \[\mathbb{D}_r=\{z\in \mathbb{C}:|z|<r\}, S_r=\{z\in 
 \mathbb{C}: |z|=r\}, 
 A(0,\delta_1,\delta_2)=\{z\in \mathbb{C}: \delta_1<|z|<\delta_2\}\] 
 for $r, \delta_1,\delta_2>0$.  
 
 In the next lemma we prove that any analytic disc $\Delta_0\subset b\Omega$ 
 is contained in a disc that intersects the coordinate axis.  This allows us to simplify 
 the problem for convex Reinhardt domains, since any 
 disc in $b\Omega$ must be horizontal or vertical.

\begin{lemma}\label{Lem1}
Let $\D$ be a bounded convex Reinhardt domain in $\C^2$ and 
$\Delta\subset b\D$ be a non-trivial analytic disc. Then there exists 
 $r>0$ and $p\in \C$ such that either 
$\Delta\subset \mathbb{D}_r\times \{p\} \subset b\D$ or 
$\Delta\subset \{p\} \times \mathbb{D}_r\subset b\D$.
\end{lemma}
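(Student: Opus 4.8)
Write $\Delta=f(\mathbb{D})$ with $f=(f_1,f_2)\colon\mathbb{D}\to b\D$ holomorphic and non-constant. The plan is: (i) use convexity and the maximum principle to trap $\Delta$ inside an affine complex line; (ii) use the Reinhardt symmetry to force that line to be a coordinate line $\{z_1=p\}$ or $\{z_2=p\}$; (iii) use the slice structure of $\overline{\D}$ to produce $r$. For (i), put $q=f(0)\in b\D$. Since $\D$ is convex there is a nonzero $\C$-linear functional $\ell(z)=a_1z_1+a_2z_2$ and a real number $c$ with $\mathrm{Re}\,\ell\le c$ on $\overline{\D}$ and $\mathrm{Re}\,\ell(q)=c$. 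Then $\mathrm{Re}(\ell\circ f)$ is harmonic on $\mathbb{D}$, bounded above by $c$, and equals $c$ at the interior point $0$, so by the maximum principle $\mathrm{Re}(\ell\circ f)\equiv c$; hence $\ell\circ f$ is holomorphic with constant real part, therefore constant. Thus $\Delta\subset L:=\{z:\ell(z)=\mu_0\}$, where $\mu_0=\ell(q)$ and $\mathrm{Re}\,\mu_0=c$.

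\textbf{Step (ii) (the line is a coordinate line).} Since $\overline{\D}$ is Reinhardt, for each $(z_1,z_2)\in\overline{\D}$ we may rotate the coordinates so that $a_1z_1\ge 0$ and $a_2z_2\ge 0$ without leaving $\overline{\D}$, which gives $|a_1z_1|+|a_2z_2|\le c$ on $\overline{\D}$. For $w=(w_1,w_2)\in\Delta\subset L\cap\overline{\D}$ we then get $c=\mathrm{Re}\,\mu_0=\mathrm{Re}(a_1w_1+a_2w_2)\le|a_1w_1+a_2w_2|\le|a_1w_1|+|a_2w_2|\le c$, so equality holds throughout and $a_1w_1\ge 0$, $a_2w_2\ge 0$. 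Applying this with $w=f(z)$ for every $z\in\mathbb{D}$, the holomorphic functions $a_1f_1$ and $a_2f_2$ are real-valued, hence constant. As $f$ is non-constant, $f_1$ and $f_2$ are not both constant, so one of $a_1,a_2$ vanishes, and since $\ell\ne 0$ exactly one does. Say $a_2=0$ (the case $a_1=0$ is symmetric). Then $f_1\equiv p:=\mu_0/a_1$, so $\Delta\subset\{z_1=p\}$; moreover $\mathrm{Re}(a_1z_1)<c$ on $\D$ while $\mathrm{Re}(a_1p)=c$, so $(\{p\}\times\C)\cap\D=\emptyset$.

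\textbf{Step (iii) (producing $r$).} The slice $V:=\{w\in\C:(p,w)\in\overline{\D}\}$ is closed, bounded, convex, and rotation-invariant, hence $V=\overline{\mathbb{D}}_r$ for some $r\ge 0$; and $\{p\}\times V=(\{p\}\times\C)\cap\overline{\D}$ is disjoint from $\D$ by Step (ii), so $\{p\}\times\overline{\mathbb{D}}_r\subset b\D$. Writing $f=(p,f_2)$ with $f_2\colon\mathbb{D}\to\overline{\mathbb{D}}_r$ non-constant holomorphic, the maximum modulus principle forces $r>0$ and $f_2(\mathbb{D})\subset\mathbb{D}_r$; hence $\Delta\subset\{p\}\times\mathbb{D}_r\subset b\D$. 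In the symmetric case one obtains $\Delta\subset\mathbb{D}_r\times\{p\}\subset b\D$.

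\textbf{Main obstacle.} The delicate point is Step (ii): a single supporting hyperplane at one boundary point already traps $\Delta$ in an affine line, but to see that this line is axis-parallel one must feed the Reinhardt symmetry together with the triangle inequality back into \emph{every} point of the disc, concluding that each coordinate of $f$ is real-valued and therefore constant; and one must notice that the \emph{same} hyperplane then shows the entire coordinate slice through $p$ avoids $\D$, which is exactly what upgrades ``$\Delta\subset b\D$'' to ``$\{p\}\times\overline{\mathbb{D}}_r\subset b\D$''. The auxiliary fact that a closed bounded convex rotation-invariant subset of $\C$ is a closed disc is elementary.
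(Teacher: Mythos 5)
Your proof is correct, and it follows a genuinely different route from the paper's. The paper first establishes flatness of the disc through a dichotomy on $f'g'$: if $f'g'\equiv 0$ the identity principle applies, and otherwise $b\D$ is argued to be smooth near a point where the disc is a smooth complex curve so that \cite[Lemma 2]{CuckovicSahutoglu14} can be invoked; it then upgrades the flat disc to a coordinate disc inside $b\D$ using completeness of convex Reinhardt domains together with a line-segment/convexity argument. You instead extract everything from a single supporting hyperplane $\{\mathrm{Re}\,\ell=c\}$ at one point of the disc: the maximum principle for harmonic functions traps $\Delta$ in the complex line $\{\ell=\mu_0\}$; the Reinhardt symmetry, fed through the equality case of the triangle inequality, makes $a_1f_1$ and $a_2f_2$ nonnegative-valued and hence constant, so the line is a coordinate line $\{z_1=p\}$; and the same hyperplane gives $(\{p\}\times\C)\cap\D=\emptyset$, after which the slice argument (a closed bounded convex circled subset of $\C$ is a closed disc) and the maximum modulus principle yield $\Delta\subset\{p\}\times\mathbb{D}_r\subset b\D$ with $r>0$. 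What each buys: the paper's route reuses machinery it imports from \cite{CuckovicSahutoglu14} anyway, but needs the somewhat delicate local smoothness discussion of $b\D$; your route is self-contained, avoids both the external lemma and that discussion, and simultaneously shows that the whole coordinate slice through $p$ misses $\D$, which replaces the paper's completeness-plus-segment step. The facts you leave as routine (the strict inequality $\mathrm{Re}\,\ell<c$ on the open set $\D$, and the identification of the slice with $\overline{\mathbb{D}}_r$) are indeed standard and are used correctly.
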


\begin{proof} 
	Suppose that $F(\mathbb{D})=\Delta$ is a non-trivial 
	disc in $b\D$ where $F(\xi)=(f(\xi),g(\xi))$.  Then either $f'g'\equiv 0$ or 
	there exists $\xi_0\in \mathbb{D}$ such that $f'(\xi_0)g'(\xi_0)\neq 0$.
	In case $f'g'\equiv 0$, by identity principle, we conclude that either 
	$f'\equiv 0$ or $g'\equiv 0$. That is, either $f$  or $g$ is constant. 
	
	On the other hand, if  $f'(\xi_0)g'(\xi_0)\neq 0$ then the disc $\Delta$ 
	is a smooth complex curve in a neighborhood $F(\xi_0)$. Furthermore, 
	the fact that $\D$ is Reinhardt domain in $\C^2$ implies  
	that $b\D$ is smooth locally in a neighborhood of $F(\xi_0)$. 
	This can be seen as follows: Without loss of generality we assume 
that $f(\xi_0)\neq 0$. Let $\xi_0=x_0+iy_0$ and 
\[G(x,y,\theta)=(e^{i\theta}f(x+iy),g(x+iy)).\] 
Then one can show that 	the image of $G$ is a smooth surface in $\C^2$ 
near $G(\xi_0, 0)=F(\xi_0)$ as the Jacobian of $G$ is of rank 3 at $(\xi_0,0)$. 
Since $b\D$ is  a 3 dimensional surface  we conclude that the boundary of 
$\D$ is smooth near near $F(\xi_0)$ as it can be seen as the image 
of $G(x,y,\theta)$. 
Then  we can apply \cite[Lemma 2]{CuckovicSahutoglu14}  
(since $b\D$ is smooth near $F(\xi_0)$) and use the identity principle 
to conclude that  either $f$ or $g$ is constant.  We reach a  contradiction with the 
	assumption that $f(\xi_0)\neq 0$. Therefore, either $\Delta$ is flat 
	and horizontal ($g$ is constant) or flat and vertical ($f$ is constant).  
	
	For the rest of the proof, without loss of generality, we assume that 
	$\Delta$ is horizontal.  There exists $p\in \mathbb{C}$,  $\delta_1> 0$,  
	and $\delta_2>0$ such that 
\[A(0,\delta_1,\delta_2)\times \{p\}\subset b\Omega.\]  
The assumption that $\Omega$ is convex and Reinhardt implies that 
$\Omega$ is complete. So, 
\begin{align}\label{Eqn0}
\{(z,w)\in \mathbb{C}^2: |z|\leq \delta_2, |w|\leq |p|\}\subset \overline{\Omega}.
\end{align}
Next, we will show that 
$\{(z,w)\in \mathbb{C}^2:|z|\leq \delta_1, |w|>|p|\}\cap \Omega=\emptyset$.  
Suppose that there exists 
$(z_0,w_0)\in\{(z,w)\in \mathbb{C}:|z|\leq \delta_1, |w|>|p|\}\cap\Omega$ 
and let $z\in \mathbb{C}$ such that $|z|=\delta_2$. 
We choose $\lambda>0$ small 
enough such that $(|z|-\lambda,|p|-\lambda)\in \Omega$ and the line segment 
joining $(|z|-\lambda,|p|-\lambda)$ with $(z_0,w_0)$, called $L_1$, is such that 
\[L_1\cap (A(0,\delta_1,\delta_2)\times \{|p|e^{i\theta}:0\leq \theta\leq 2\pi \})\neq \emptyset.\]  
However, since $\overline{A(0,\delta_1,\delta_2)}\times \{|p|e^{i\theta}:0\leq \theta\leq 2\pi 
\}\subset b\Omega$, we conclude $L_1\cap b\Omega\neq \emptyset$.  
Since the initial and terminal points of $L_1$ lie in $\Omega$ and $\Omega$ 
is convex, we arrive at a contradiction.  This shows that 
$\{(z,w)\in \mathbb{C}^2:|z|\leq \delta_1, |w|>|p|\}\cap \Omega=\emptyset$. Combining 
this with \eqref{Eqn0} we conclude that 
$\{(z,w)\in \mathbb{C}^2: |z|\leq \delta_2, |w|=|p|\}\subset b\D.$
\end{proof}
We take this opportunity to correct a typo in 
\cite[Lemma 2]{CuckovicSahutoglu14}. In the statement of the lemma, the word 
``complete'' should be ``convex''.  The lemma is proven for the correct 
domains: piecewise smooth bounded convex Reinhardt domains in $\C^2$. 
\begin{remark}
	Lemma \ref{Lem1} implies that if $\Omega\subset\mathbb{C}^2$ is a bounded convex 
	Reinhardt domain with a piecewise smooth boundary, then any horizontal analytic disc in 
	$b\Omega$ is contained in $\mathbb{D}_r\times S_{q}$ for some $r>0$ and $q>0$. Likewise,  
	any vertical analytic disc in $b\D$ is contained in $S_{q'}\times \mathbb{D}_{r'}$ for some 
	$r'>0$ and $q'>0$. 
\end{remark}   

	\begin{figure}[h]
		\centering
		\includegraphics[width=0.2\linewidth]{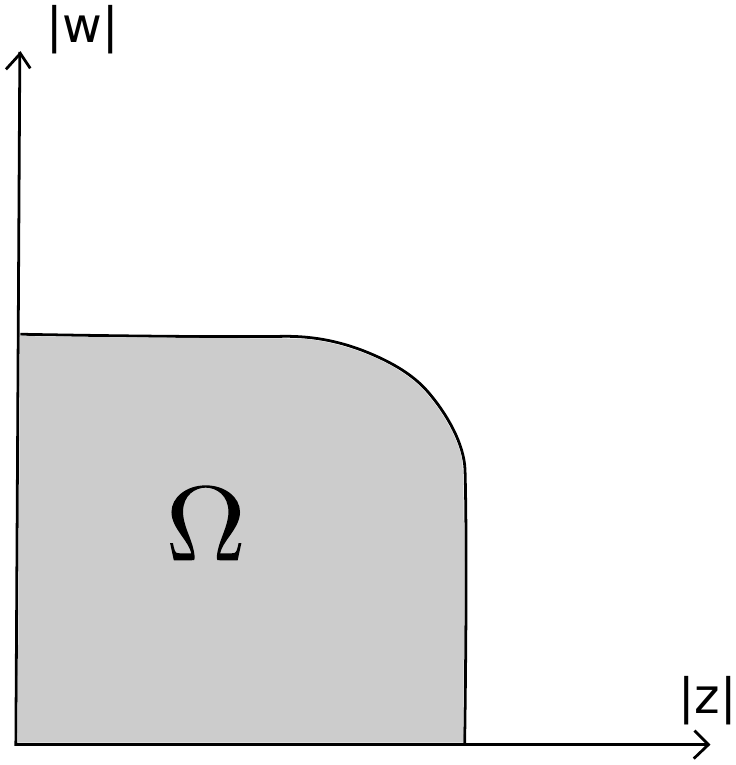}
		\label{GraphConvexReinhardt}
	\end{figure}

 As in \cite{CuckovicSahutoglu14} we represent a complete Reinhardt domain 
 $\D\subset \C^2$ as union of  horizontal slices. In other words, let 
$H_{\D}$ be an open disc in $\C$ such that 
 \begin{align}\label{EqnSlice}
 \Omega=\bigcup_{w\in H_{\D}}\Delta_w\times \{w\}
 \end{align}
where $\Delta_w=\{z\in \mathbb{C}:|z|<r_w\}$ is the slice of $\D$ at $w$ level. 
That is, $(z,w)\in \D$ if and only if $|z|<r_w$.  

\begin{lemma}[\cite{CuckovicSahutoglu14}]\label{Lem2}
Let $\phi\in C(\mathbb{C})$ and $f:\mathbb{C}\rightarrow \mathbb{C}$ be an 
entire function.  Then 
\[\|H_{\phi}^{\mathbb{D}_r} f\|_{L^2(\mathbb{D}_r)}\rightarrow 
\|H_{\phi}^{\mathbb{D}_{r_0}} f\|_{L^2(\mathbb{D}_{r_0})}\] as 
$r\rightarrow r_0$.
\end{lemma}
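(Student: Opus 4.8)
The claim is a continuity statement for the norm of the image of a fixed entire function $f$ under the Hankel operator $H_\phi$ as the radius of the disc varies. The plan is to reduce everything to explicit integrals against the Bergman kernel of $\mathbb{D}_r$, which has the closed form $K_{\mathbb{D}_r}(z,\zeta) = r^2/(\pi(r^2-z\bar\zeta)^2)$, and then to exploit the fact that $f$ and $\phi f$ are bounded on any fixed compact disc to interchange limits and integrals.

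First I would write $\|H_\phi^{\mathbb{D}_r} f\|^2 = \|\phi f\|^2_{L^2(\mathbb{D}_r)} - \|P_{\mathbb{D}_r}(\phi f)\|^2_{L^2(\mathbb{D}_r)}$, using that $H_\phi = (I-P)M_{\phi}$ is an orthogonal decomposition. The first term, $\int_{\mathbb{D}_r}|\phi f|^2\,dA$, is manifestly continuous in $r$ because $\phi f$ is continuous on $\mathbb{C}$, hence bounded on a neighbourhood of $\overline{\mathbb{D}_{r_0}}$, so dominated convergence applies to $\int_{\mathbb{C}} |\phi f|^2 \chi_{\mathbb{D}_r}\,dA$. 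The real work is the second term. Here I would use the reproducing formula $P_{\mathbb{D}_r}(\phi f)(z) = \int_{\mathbb{D}_r} \phi(\zeta) f(\zeta) \overline{K_{\mathbb{D}_r}(z,\zeta)}\, dA(\zeta)$ and then $\|P_{\mathbb{D}_r}(\phi f)\|^2 = \langle P_{\mathbb{D}_r}(\phi f), \phi f\rangle = \int_{\mathbb{D}_r}\int_{\mathbb{D}_r} \phi(\zeta)f(\zeta)\overline{\phi(z)f(z)}\, K_{\mathbb{D}_r}(z,\zeta)\, dA(\zeta)\,dA(z)$. Substituting the explicit kernel and rescaling $z = r u$, $\zeta = r v$ with $u,v\in\mathbb{D}$ turns this into $\int_{\mathbb{D}}\int_{\mathbb{D}} \phi(ru)f(ru)\overline{\phi(rv)f(rv)}\,\frac{r^2}{\pi(1-u\bar v)^2}\, dA(u)\, dA(v)$, an integral over a fixed domain with an $r$-dependent but (for $r$ near $r_0$) uniformly bounded and continuous integrand; dominated convergence then gives continuity as $r\to r_0$.

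The main obstacle is the non-integrable-looking singularity of the Bergman kernel on the diagonal boundary, i.e.\ the factor $(1-u\bar v)^{-2}$ as $u,v\to$ the same boundary point of $\mathbb{D}$. One cannot simply bound $|\phi f|$ by a constant and pull it out, since $\iint_{\mathbb{D}\times\mathbb{D}} |1-u\bar v|^{-2}\,dA\,dA = \infty$. The way around this is to keep the holomorphy of $f$ in play: since $P_{\mathbb{D}_r}(\phi f) \in A^2(\mathbb{D}_r)$, one should estimate $\|P_{\mathbb{D}_r}(\phi f)\|$ via the inequality $\|P_{\mathbb{D}_r}(\phi f)\|_{L^2(\mathbb{D}_r)} \le \|\phi f\|_{L^2(\mathbb{D}_r)}$ together with an interior-versus-boundary split of the disc $\mathbb{D}_r = \mathbb{D}_{(1-\ep)r} \cup (\mathbb{D}_r\setminus \mathbb{D}_{(1-\ep)r})$: on the inner disc the kernel is bounded and continuity is routine, while the thin annulus contributes an $L^2$-mass of $\phi f$ that is uniformly $O(\ep)$ (again because $\phi f$ is bounded near $\overline{\mathbb{D}_{r_0}}$), so its contribution to $\|P_{\mathbb{D}_r}(\phi f)\|$ is uniformly small. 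Letting $\ep\to 0$ after $r\to r_0$ closes the argument. A cleaner alternative, which I would in fact prefer to present, is to note that for fixed $f$ the map $r\mapsto P_{\mathbb{D}_r}(\phi f)$, viewed after the rescaling above as a family of functions on the fixed disc $\mathbb{D}$, depends continuously in $L^2(\mathbb{D})$ on $r$ — this follows from the continuity in $r$ of the rescaled kernels $r^2(1-u\bar v)^{-2}/\pi$ in, say, $L^2$ of the subdiagonal region plus the uniform $L^2$-smallness of the boundary-annulus piece — and norm continuity is then immediate.
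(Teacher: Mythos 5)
Your argument is essentially correct, but note first that this paper does not prove Lemma \ref{Lem2} at all: it is imported from \cite{CuckovicSahutoglu14}, so there is no in-text proof to compare against. On the merits: the decomposition $\|H_{\phi}^{\mathbb{D}_r}f\|^2=\|\phi f\|^2_{L^2(\mathbb{D}_r)}-\|P^{\mathbb{D}_r}(\phi f)\|^2_{L^2(\mathbb{D}_r)}$, the identification of the real difficulty (the double integral against $K_{\mathbb{D}_r}$ is not absolutely convergent near the diagonal boundary), and your fix (split $\phi f$ into the piece on $\mathbb{D}_{(1-\ep)r}$, where the kernel is bounded and dominated convergence applies, plus the annulus piece, whose contribution to $\|P^{\mathbb{D}_r}(\phi f)\|$ is at most $\|\phi f\chi_{\text{annulus}}\|_{L^2}=O(\sqrt{\ep})$ uniformly in $r$ near $r_0$, followed by the standard uniform-approximation limit interchange) all go through. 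However, the kernel estimates are unnecessary: your own rescaling already does everything, because the dilation $U_r:L^2(\mathbb{D})\to L^2(\mathbb{D}_r)$, $(U_rh)(z)=r^{-1}h(z/r)$, is a unitary taking $A^2(\mathbb{D})$ onto $A^2(\mathbb{D}_r)$, hence conjugating $P^{\mathbb{D}_r}$ to $P^{\mathbb{D}}$; this gives the identity $\|H_{\phi}^{\mathbb{D}_r}f\|_{L^2(\mathbb{D}_r)}=r\,\|(I-P^{\mathbb{D}})(\phi_rf_r)\|_{L^2(\mathbb{D})}$ with $\phi_r(u)f_r(u)=\phi(ru)f(ru)$, and since $\phi f$ is uniformly continuous on a compact neighborhood of $\overline{\mathbb{D}}_{r_0}$, $\phi_rf_r\to\phi_{r_0}f_{r_0}$ uniformly on $\overline{\mathbb{D}}$, so boundedness of $I-P^{\mathbb{D}}$ on $L^2(\mathbb{D})$ finishes the proof in two lines, with no analysis of the Bergman kernel. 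Relatedly, the phrasing of your ``cleaner alternative'' misplaces the issue: after rescaling, the kernel $\frac{r^2}{\pi(1-u\bar v)^2}$ depends on $r$ only through the harmless factor $r^2$, so ``continuity in $r$ of the rescaled kernels'' is not where the content lies --- the entire $r$-dependence sits in the symbol, and the correct observation is simply that the fixed projection $P^{\mathbb{D}}$ is applied to functions converging in $L^2(\mathbb{D})$.
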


Lemma \ref{Lem1}, Lemma \ref{Lem2}, and  \cite[Lemma 3]{CuckovicSahutoglu14} 
imply the following corollary.

\begin{corollary}\label{Cor1}
	Let $\D$ be a bounded convex Reinhardt domain in 
	$\C^2, \phi\in C(\mathbb{C})$, and $\Delta_{w_0}\times \{w_0\}$ 
	be a non-trivial analytic disc in $b\D$ where $w_0\in bH_{\D}$.  Then 
\[\lim_{H_{\D}\ni w\rightarrow w_0}\|H^{\Delta_w}_{\phi}(1)\|_{L^2(\Delta_w)}
=\|H^{\Delta_{w_0}}_{\phi}(1)\|_{L^2(\Delta_{w_0})}.\]
\end{corollary}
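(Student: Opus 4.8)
The plan is to reduce the assertion to the one--variable convergence recorded in Lemma~\ref{Lem2}; all of the real work is geometric and amounts to showing that the slices $\Delta_w$ degenerate to $\Delta_{w_0}$ in the sense that their radii converge. Throughout I write $\Delta_{w_0}=\mathbb{D}_{\rho_0}$ for the limiting slice at the boundary level $w_0$ and $r_w$ for the radius of $\Delta_w$ as in \eqref{EqnSlice}.

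First I would invoke Lemma~\ref{Lem1} to assume, after interchanging the two coordinates if necessary, that the disc is horizontal, so that $\mathbb{D}_{\rho_0}\times\{w_0\}\subset b\Omega$ with $\rho_0>0$; inspection of the proof of Lemma~\ref{Lem1} also yields the slab $\{(z,w):|z|\le\rho_0,\ |w|\le|w_0|\}\subset\overline{\Omega}$ together with the fact that no point of $\Omega$ with $|z|$ small lies over $|w|>|w_0|$. The key step is then
\[
\lim_{H_\Omega\ni w\to w_0} r_w \;=\; \rho_0 ,
\]
which is exactly \cite[Lemma 3]{CuckovicSahutoglu14}. For the reader I would sketch why it holds: since $\Omega$ is Reinhardt, $r_w$ depends only on $|w|$; since $\Omega$ is convex and Reinhardt, the shadow $\{(|z|,|w|):(z,w)\in\Omega\}$ is a convex subset of the first quadrant, so $r_w$, viewed as a function of $t=|w|$, is concave on the interval on which it is positive and hence has a (finite) one--sided limit at $|w_0|$, with $\liminf_{w\to w_0}r_w\ge\rho_0$. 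The slab structure from Lemma~\ref{Lem1} forces every $w\in H_\Omega$ sufficiently near $w_0$ to satisfy $|w|<|w_0|$, and if the limit exceeded $\rho_0$ then a limit of interior points of $\Omega$ would exhibit a disc $\mathbb{D}_\rho\times\{w_0\}\subset b\Omega$ with $\rho>\rho_0$, contradicting that $\mathbb{D}_{\rho_0}\times\{w_0\}$ is the maximal analytic disc over $w_0$.

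With $r_w\to\rho_0$ established, the conclusion is immediate: $1$ is entire and $\phi\in C(\mathbb{C})$, so Lemma~\ref{Lem2} applied with $f\equiv 1$ and $r_0=\rho_0$ gives
\[
\|H^{\Delta_w}_{\phi}(1)\|_{L^2(\Delta_w)}=\|H^{\mathbb{D}_{r_w}}_{\phi}(1)\|_{L^2(\mathbb{D}_{r_w})}\longrightarrow\|H^{\mathbb{D}_{\rho_0}}_{\phi}(1)\|_{L^2(\mathbb{D}_{\rho_0})}=\|H^{\Delta_{w_0}}_{\phi}(1)\|_{L^2(\Delta_{w_0})},
\]
as claimed.

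The main obstacle is the geometric step $r_w\to\rho_0$: convexity of the shadow only delivers the inequality $\liminf_{w\to w_0} r_w\ge\rho_0$, so one still has to rule out that the slices ``open up'' past $\Delta_{w_0}$ in the limit. This is precisely where the maximality of the limiting disc and the slab structure extracted in the proof of Lemma~\ref{Lem1} --- packaged as \cite[Lemma 3]{CuckovicSahutoglu14} --- are needed; once that is in hand, Lemma~\ref{Lem2} finishes the argument mechanically.
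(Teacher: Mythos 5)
Your argument is correct and matches the paper's intended route: the paper derives Corollary~\ref{Cor1} precisely from Lemma~\ref{Lem1} (flatness and the slab structure of the horizontal disc), the radius convergence $r_w\to r_{w_0}$ supplied by \cite[Lemma 3]{CuckovicSahutoglu14}, and Lemma~\ref{Lem2} applied with $f\equiv 1$. Nothing essential differs from the paper's (unwritten but indicated) proof.
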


\begin{lemma}\label{Lem3} 
Let $\Omega$ be a bounded convex Reinhardt domain in $\C^2$ and 
$\phi\in C(\overline{\Omega})$. Furthermore, let $w_0\in bH_{\D}$ and 
$\phi_0(z,w)=\phi(z,w_0)$.  Assume that $H_{\phi}$ is compact on 
$A^2(\Omega)$ and  $\{g_j\}$ is a bounded sequence in $A^2(H_{\D})$ 
such that $g_j\rightarrow 0$ uniformly on $H_{\D}\setminus V$ as 
$j\to\infty$ for any open set $V$ containing $w_0$. Then 
$H_{\phi_0}g_j\rightarrow 0$ as $j\rightarrow \infty$.     
\end{lemma}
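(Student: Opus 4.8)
The plan is to exploit the fact that, because $\Delta_{w_0}\times\{w_0\}$ is a non-trivial analytic disc in $b\Omega$ sitting over $w_0\in bH_\Omega$, the slices $\Delta_w$ do not shrink as $w\to w_0$ inside $H_\Omega$: by Lemma \ref{Lem1} they stabilize to a fixed disc $\mathbb D_{r_0}$ (here $r_0=r_{w_0}>0$), and in fact $\mathbb D_{r_0}\times S_{|w_0|}\subset b\Omega$. First I would lift the sequence $\{g_j\}$ from $A^2(H_\Omega)$ to $A^2(\Omega)$. The natural candidate is $G_j(z,w)=g_j(w)$, but this need not be square-integrable on $\Omega$ unless $\Omega$ is bounded in the $z$-direction uniformly — which it is, since $\Omega$ is bounded — so in fact $G_j\in A^2(\Omega)$ with $\|G_j\|_{L^2(\Omega)}\le C\|g_j\|_{L^2(H_\Omega)}$, and $\{G_j\}$ is a bounded sequence in $A^2(\Omega)$. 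Since $g_j\to 0$ uniformly off every neighborhood of $w_0$, the same holds for $G_j$ off every neighborhood of the slice over $w_0$; combined with boundedness this gives $G_j\rightharpoonup 0$ weakly in $A^2(\Omega)$, hence $\|H_\phi G_j\|_{L^2(\Omega)}\to 0$ by compactness of $H_\phi$.

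The second step is to compare $H_\phi G_j$ with the ``frozen-symbol'' operator acting slice-wise. Fatou/Fubini lets us write $\|H_{\phi_0}g_j\|$ in terms of the slice norms $\|H_{\phi_0}^{\Delta_w}(g_j(w)\cdot 1)\|_{L^2(\Delta_w)}=|g_j(w)|\,\|H_{\phi_0}^{\Delta_w}(1)\|_{L^2(\Delta_w)}$; the key point here is that on a slice $\Delta_w\times\{w\}$ the one-variable Hankel operator with the constant-in-$z$... no: $\phi_0(z,w)=\phi(z,w_0)$ genuinely depends on $z$, so $H_{\phi_0}^{\Delta_w}(1)$ is a non-trivial one-variable Hankel symbol. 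By Corollary \ref{Cor1}, $\|H_{\phi_0}^{\Delta_w}(1)\|_{L^2(\Delta_w)}\to\|H_{\phi_0}^{\Delta_{w_0}}(1)\|_{L^2(\Delta_{w_0})}=:c$ as $w\to w_0$ in $H_\Omega$. I would then need: (i) near $w_0$, since the slice radii are bounded below, $\|H_\phi G_j\|_{L^2(\Omega)}^2\gtrsim \int_{V\cap H_\Omega}\|H_{\phi(\cdot,w)}^{\Delta_w}(g_j(w))\|^2_{L^2(\Delta_w)}\,dA(w)$, and since $\phi$ is uniformly continuous on $\overline\Omega$, $\phi(\cdot,w)$ is uniformly close to $\phi_0(\cdot,w_0)$ on $\overline{\Delta_w}$ for $w$ near $w_0$, so this differs from $c^2\int_{V}|g_j(w)|^2\,dA(w)$ by an error controlled by the modulus of continuity of $\phi$ times $\|g_j\|^2$; (ii) away from $w_0$, i.e. on $H_\Omega\setminus V$, the term $\int_{H_\Omega\setminus V}|g_j(w)|^2\,\|H_{\phi_0}^{\Delta_w}(1)\|^2\,dA(w)\le (\sup_w\|H_{\phi_0}^{\Delta_w}(1)\|^2)\cdot\|g_j\|_{L^\infty(H_\Omega\setminus V)}^2\,|H_\Omega|\to 0$.

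Putting these together: if $c=0$ we are done immediately since then $\|H_{\phi_0}^{\Delta_{w_0}}(1)\|=0$ forces $\|H_{\phi_0}^{\Delta_w}(1)\|$ small near $w_0$ and the bound is trivial. If $c>0$, then for a small enough neighborhood $V$ of $w_0$ we get
\[
c^2\int_{V\cap H_\Omega}|g_j(w)|^2\,dA(w)\le \|H_\phi G_j\|_{L^2(\Omega)}^2+\eta\|g_j\|_{L^2(H_\Omega)}^2+o_j(1),
\]
where $\eta$ is as small as we like by shrinking $V$ and using uniform continuity of $\phi$; and on $H_\Omega\setminus V$ the integrand $\to 0$ uniformly. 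Hence $\|H_{\phi_0}g_j\|_{L^2(\Omega)}^2=\int_{H_\Omega}|g_j(w)|^2\|H_{\phi_0}^{\Delta_w}(1)\|^2\,dA(w)\to 0$. The main obstacle I anticipate is step (i): carefully passing from the compactness estimate for $H_\phi$ on the full domain $\Omega$ to a \emph{lower} bound by slice-wise Hankel norms, since $P_\Omega$ is not literally the direct integral of the slice Bergman projections $P_{\Delta_w}$. One resolves this by noting that for a function of the form $g_j(w)\cdot h(z)$, projecting in $z$ first (which \emph{is} exactly slice-wise, by Fubini and the Reinhardt structure) and then observing that $(I-P_\Omega)$ dominates the $z$-slice part up to the holomorphic-in-$w$ correction, which contributes nothing after integrating against the anti-holomorphic test directions — this is precisely the content of \cite[Lemma 3]{CuckovicSahutoglu14}, which I would invoke rather than reprove.
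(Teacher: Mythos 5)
Your final step rests on the identity $\|H_{\phi_0}g_j\|^2_{L^2(\Omega)}=\int_{H_{\D}}|g_j(w)|^2\|H^{\Delta_w}_{\phi_0}(1)\|^2_{L^2(\Delta_w)}\,dV(w)$, and that is where the argument breaks: the Bergman projection on $\D$ is not the slicewise projection in $z$, so this is not an identity. The correct formula (Lemma \ref{Lem4}) carries the extra cross term $\int_{\Omega}(H_{\phi_0}g_j)\overline{P^{\Delta_w}(\phi_0)g_j}\,dV$, and proving that this term tends to $0$ is exactly the content of Lemma \ref{Lem5}; your closing remark that the ``holomorphic-in-$w$ correction contributes nothing'' and the appeal to \cite[Lemma 3]{CuckovicSahutoglu14} do not supply this (that lemma feeds Corollary \ref{Cor1} and concerns convergence of slice Hankel norms, not the cross term). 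The inequality you can get cheaply goes the wrong way: since $A^2(\D)$ lies in the range of the slicewise projection $u\mapsto P^{\Delta_w}(u(\cdot,w))$, one has $\|H_{\psi}u\|^2_{L^2(\D)}\geq \int_{H_{\D}}\|(I-P^{\Delta_w})(\psi(\cdot,w)u(\cdot,w))\|^2_{L^2(\Delta_w)}dV(w)$ — this validates your step (i), but it is a lower bound, whereas the conclusion needs an upper bound on $\|H_{\phi_0}g_j\|$. In the branch $c>0$ your estimates can actually be salvaged: they show $\|g_j\|_{L^2(H_{\D})}\to 0$, after which $\|H_{\phi_0}g_j\|\leq \|\phi_0\|_{L^\infty}\|g_j\|_{L^2(\D)}\to 0$ trivially; but in the branch $c=0$ no such rescue exists and the gap is essential. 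A further problem: invoking Corollary \ref{Cor1} presupposes that $\Delta_{w_0}\times\{w_0\}$ is a non-trivial analytic disc in $b\D$, which is not among the hypotheses of the lemma — only $w_0\in bH_{\D}$ is assumed.

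The lemma has a much more direct proof, which is the one the paper gives and which needs none of the slice machinery. Write $H_{\phi_0}=H_{\phi}-H_{\phi-\phi_0}$, so $\|H_{\phi_0}g_j\|\leq \|H_{\phi}g_j\|+\|H_{\phi-\phi_0}g_j\|$. The first term tends to $0$ by compactness of $H_{\phi}$, using your (correct) observation that the lifted sequence is bounded and weakly null in $A^2(\D)$. For the second term use only the contraction bound $\|H_{\phi-\phi_0}g_j\|\leq \|(\phi-\phi_0)g_j\|_{L^2(\D)}$ and split $\D$ into $\{|w-w_0|\leq\delta\}$, where $\phi-\phi_0$ is uniformly small because it vanishes at $w=w_0$ and is uniformly continuous on $\Dc$, and its complement, where $g_j\to 0$ uniformly. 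This avoids Corollary \ref{Cor1}, Lemma \ref{Lem4}, and Lemma \ref{Lem5} entirely; those tools are reserved in the paper for showing that $\|H_{\phi_0}g_j\|$ does \emph{not} tend to zero when $\phi_0$ fails to be holomorphic along the boundary disc, which is the opposite direction from what this lemma asserts.
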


\begin{proof}
We note that $g_j\rightarrow 0$ weakly in $A^2(\Omega)$ as $j\rightarrow 0$.  
Hence, by compactness of $H_{\phi}$ we have 
$\|H_{\phi}g_j\|_{L^2(\Omega)}\rightarrow 0$ as $j\rightarrow 
\infty$.  Now, we write \[\|H_{\phi_0}g_j\|_{L^2(\Omega)}\leq 
\|H_{\phi-\phi_0}g_j\|_{L^2(\Omega)}+\|H_{\phi}g_j\|_{L^2(\Omega)}.\]  
So, we just consider the first term on the right hand side of the above inequality. 
Since $\{g_j\}$ is a bounded sequence, there exists $M>0$ such that 
$\|g_j\|^2_{L^2(\Omega)}\leq M$ for all $j\in \mathbb{N}$.  
Furthermore, since $\phi-\phi_0$ is continuous on $\overline{\Omega}$ 
and $\phi-\phi_0=0$ on $\overline{\Delta_{w_0}}$ for all $\varepsilon>0$, 
there exists $\delta>0$ such that  
\[\sup\{|\phi(z,w)-\phi_0(z,w)|^2:(z,w)\in \overline{\Omega}, 
|w-w_0|\leq\delta\}<\frac{\varepsilon}{2M}.\]  
We note that, below, $V(\D)$ denotes  the volume of $\D$ with respect 
to Lebesgue measure. 
\begin{align*}
\|H_{\phi-\phi_0}g_j\|^2_{L^2(\Omega)}
\leq&\|({\phi-\phi_0})g_j\|^2_{L^{2}(\{(z,w)\in \Omega: |w-w_0|\leq \delta\})}\\
&+V(\D)\|({\phi-\phi_0})g_j\|^2_{L^{\infty}(\{(z,w)\in \Omega: |w-w_0|>\delta\})}\\
<&\frac{\ep}{2} +V(\Omega)\|({\phi-\phi_0})g_j\|^2_{L^{\infty}(\{(z,w)\in \Omega: 
|w-w_0|>\delta\})}.
\end{align*}
Since $(\phi-\phi_0)\in C(\Dc)$ and $g_j\rightarrow 0$ uniformly on 
$\{(z,w)\in \D: |w-w_0|>\delta\}$ as $j\to\infty$, we conclude that 
for any $\delta,\varepsilon>0$ there exists 
$j_0\in \mathbb{N}$ such that 
\[V(\Omega)\|({\phi-\phi_0})g_j\|^2_{L^{\infty}(\{(z,w)\in \Omega: 
|w-w_0|>\delta\})}<\frac{\varepsilon}{2}\]
for $j\geq j_0$. Therefore, 
\[\|H_{\phi-\phi_0}g_j\|^2_{L^2(\Omega)}<\varepsilon\]
 for $j\geq j_0$ and  the proof of the lemma is complete.
\end{proof}

Before we state the next lemma some explanation about the notation 
is in order. We think of the operators as defined on spaces on $\D$ 
unless the domain is indicated as a superscript. For instance, for an 
open subset $V$ of $\D$ the operators $H_{\phi}^{V}$ and $P^{V}$ 
are  defined on $A^2(V)$ and $L^2(V)$, respectively; 
whereas, $H_{\phi}$ and $P$ are defined on $A^2(\D)$ and $L^2(\D)$, 
respectively. Furthermore, in the next two lemmas, 
we think of $\phi$ as a function of $z$ (or as a function of $(z,w)$ but 
independent of $w$). For instance, $\phi$ is a function of $z$ in $H^{\Delta_w}_{\phi}$ 
and a function $(z,w)$ (but independent of $w$) in $H_{\phi}$.

The following lemma is a special case of equation (3) in 
\cite[pg. 637]{CuckovicSahutoglu14} for 	
$\phi=\psi_0=\phi_0$ and $f_1= f_2\equiv 1$. 
 
\begin{lemma}[\cite{CuckovicSahutoglu14}]\label{Lem4}
	Let $\D$ be a bounded convex Reinhardt domain in $\C^2$ and 
	$\phi \in C(\Dc)$ such that $\phi(z,w)=\phi(z,0)$ for $(z,w)\in \D$.
	Then 
		\begin{align*}
		\| H_{\phi}g\|^2_{L^2(\Omega)}
		=&\int_{H_{\D}}|g(w)|^2\int_{\Delta_w}|H^{\Delta_w}_{\phi}(1)(z)|^2dV(z)dV(w)\\
		&+\int_{\Omega}(H_{\phi}g)(z,w)\overline{P^{\Delta_w}(\phi)(z)g(w)}dV(z,w)
		\end{align*}
		for  $g\in A^2(H_{\D})$
\end{lemma}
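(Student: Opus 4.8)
The plan is to prove the identity by reducing the norm of $H_\phi g$ on $\Omega$ to an integral over the horizontal slices $\Delta_w\times\{w\}$, exploiting the fact that $\phi$ is independent of $w$ and that $g\in A^2(H_\Omega)$ is independent of $z$. First I would recall the slice representation \eqref{EqnSlice}, $\Omega=\bigcup_{w\in H_\Omega}\Delta_w\times\{w\}$, and observe that since $g=g(w)$ and the product $\phi(z)g(w)$ is being acted on by $I-P$, it is natural to compare the full Bergman projection $P$ on $\Omega$ with the slicewise projections $P^{\Delta_w}$. The key structural point, which is essentially equation (3) in \cite{CuckovicSahutoglu14}, is that for a function of the form $\phi(z)g(w)$ one can write $(I-P)(\phi g) = (I-P^{\Delta_w})(\phi)(z)\,g(w) + \big(P^{\Delta_w}(\phi)(z)g(w) - P(\phi g)\big)$, i.e. we split off the part of the symbol that is already holomorphic in $z$ on each slice.

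Next I would compute $\|H_\phi g\|^2_{L^2(\Omega)} = \langle H_\phi g, (I-P)(\phi g)\rangle = \langle H_\phi g, \phi g\rangle$ since $H_\phi g\perp A^2(\Omega)$. Now substitute $\phi(z)g(w) = (I-P^{\Delta_w})(\phi)(z)g(w) + P^{\Delta_w}(\phi)(z)g(w)$. The first piece gives
\[
\int_\Omega (H_\phi g)(z,w)\,\overline{(I-P^{\Delta_w})(\phi)(z)\,g(w)}\,dV(z,w).
\]
Here I would use that $H_\phi g$ is orthogonal to $A^2(\Omega)$, hence in particular, upon integrating in $z$ over $\Delta_w$ for fixed $w$, one can replace $(H_\phi g)(z,w)$ by $(I-P^{\Delta_w})\big((H_\phi g)(\cdot,w)\big)(z)$ without changing the inner product against $(I-P^{\Delta_w})(\phi)(z)$; combined with the self-adjointness of $I-P^{\Delta_w}$ this collapses the first piece to $\int_{H_\Omega}|g(w)|^2\int_{\Delta_w}|H^{\Delta_w}_\phi(1)(z)|^2\,dV(z)\,dV(w)$, using $H^{\Delta_w}_\phi(1)=(I-P^{\Delta_w})(\phi)$. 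The second piece is exactly the remaining term $\int_\Omega (H_\phi g)(z,w)\overline{P^{\Delta_w}(\phi)(z)g(w)}\,dV(z,w)$ in the statement, so no further manipulation is needed there.

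The step I expect to be the main obstacle is the justification that, inside the slice integral against $(I-P^{\Delta_w})(\phi)$, one may legitimately replace $H_\phi g$ with its slicewise Hankel-type projection and thereby produce $H^{\Delta_w}_\phi(1)$: this requires knowing that the restriction of $H_\phi g$ to $\Delta_w\times\{w\}$ is (up to the factor $g(w)$, after using holomorphy of $g$ and a change of the inner product to one slice at a time) orthogonal in $L^2(\Delta_w)$ to $A^2(\Delta_w)$, which is not immediate because $P$ on $\Omega$ does not decompose as a direct sum of the $P^{\Delta_w}$. One handles this by noting that any $h\in A^2(\Delta_w)$, viewed as a function of $z$ only, times a suitable holomorphic function of $w$ supported near that slice, lies in $A^2(\Omega)$, and then passing to a limit; alternatively one quotes the computation in \cite[pg.\ 637]{CuckovicSahutoglu14} verbatim, since the lemma is explicitly stated to be the special case $\phi=\psi_0=\phi_0$, $f_1=f_2\equiv 1$ of their equation (3). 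Since we are permitted to assume results stated earlier, the cleanest route is to carry out the two-term split above and then invoke that reference for the identification of the first term, checking only that the specialization of their parameters yields precisely the stated formula.
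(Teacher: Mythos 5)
Your proposal is correct, and it takes a genuinely different route from the paper: the paper gives no proof of this lemma at all, presenting it purely as the special case $\phi=\psi_0=\phi_0$, $f_1=f_2\equiv 1$ of equation (3) on p.~637 of \cite{CuckovicSahutoglu14}, whereas you reconstruct the identity directly. Your computation --- $\|H_\phi g\|^2_{L^2(\Omega)}=\langle H_\phi g,\phi g\rangle$ followed by the slicewise split $\phi=H^{\Delta_w}_{\phi}(1)+P^{\Delta_w}(\phi)$ on each $\Delta_w$ --- is exactly the right decomposition, and it closes more easily than you fear. The step you single out as the main obstacle is a non-issue: you never need the slice $(H_\phi g)(\cdot,w)$ to be orthogonal to $A^2(\Delta_w)$. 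Pairing it against $H^{\Delta_w}_{\phi}(1)=(I-P^{\Delta_w})\phi$ and using self-adjointness and idempotency of $I-P^{\Delta_w}$, all that is required is the identity $(I-P^{\Delta_w})\big((H_\phi g)(\cdot,w)\big)=g(w)\,H^{\Delta_w}_{\phi}(1)$, i.e.\ that the subtracted part $P(\phi g)(\cdot,w)$ is annihilated by $I-P^{\Delta_w}$; this holds for almost every $w\in H_{\Omega}$ because $P(\phi g)$ is holomorphic on $\Omega$ (hence holomorphic in $z$ on each slice) and belongs to $L^2(\Delta_w)$ for a.e.\ $w$ by Fubini. Then $\langle g(w)(I-P^{\Delta_w})\phi,\phi\rangle_{L^2(\Delta_w)}=g(w)\|H^{\Delta_w}_{\phi}(1)\|^2_{L^2(\Delta_w)}$, and integrating against $\overline{g(w)}$ gives the first term of the statement, while your second piece is verbatim the second term. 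So the limiting/test-function device you sketch in your last paragraph is unnecessary, as is the fallback citation; the only housekeeping is the Fubini justification, which is immediate since $\phi$ is bounded and $g\in A^2(H_{\Omega})$ implies $\phi g\in L^2(\Omega)$. In short: the paper's citation keeps the exposition brief but opaque, while your direct argument is self-contained and, once the observation above replaces your third paragraph, complete.
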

Similarly the  following lemma is included in \cite[pg 640]{CuckovicSahutoglu14} 
again  for 	$\phi=\psi_0=\phi_0$ and $f_1= f_2\equiv 1$. 

\begin{lemma}[\cite{CuckovicSahutoglu14}]\label{Lem5} 
Let $\D$ be a bounded convex Reinhardt domain in $\C^2$ and 
$\phi \in C(\Dc)$ such that $\phi(z,w)=\phi(z,0)$ for $(z,w)\in \D$. Assume 
that $\{g_j\}$ is a bounded sequence in $A^2(H_{\D})$ such that $g_j\rightarrow 0$ 
uniformly on $\overline{H}_{\D}\setminus V$ for any open set $V$ containing 
$w_0\in H_{\D}$. Then 
\[\int_{\Omega}(H_{\phi}g_j)(z,w)\overline{P^{\Delta_w}(\phi)(z)g_j(w)}dV(w,z)
\rightarrow  0 \text{ as }j\rightarrow \infty.\]
\end{lemma}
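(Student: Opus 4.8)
The goal is to show the ``cross term'' from Lemma \ref{Lem4} vanishes along the sequence $\{g_j\}$. The plan is to bound the integral by Cauchy--Schwarz and then control each factor separately, exploiting the concentration of $g_j$ near $w_0$. First I would write
\[\left|\int_{\Omega}(H_{\phi}g_j)(z,w)\overline{P^{\Delta_w}(\phi)(z)g_j(w)}\,dV(z,w)\right|
\leq \|H_{\phi}g_j\|_{L^2(\Omega)}\left(\int_{H_{\D}}|g_j(w)|^2\|P^{\Delta_w}(\phi)\|^2_{L^2(\Delta_w)}\,dV(w)\right)^{1/2}.\]
The first factor $\|H_{\phi}g_j\|_{L^2(\Omega)}$ is bounded: since $\phi\in C(\Dc)$ and is independent of $w$, $H_\phi$ is a bounded operator, and $\{g_j\}$ is bounded in $A^2(H_\D)$, hence bounded in $A^2(\Omega)$ (the domain being bounded). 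So it suffices to show the second factor tends to $0$.

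For the second factor, the key observation is that $w\mapsto \|P^{\Delta_w}(\phi)\|_{L^2(\Delta_w)}$ is a bounded function of $w\in H_\D$: the slices $\Delta_w=\mathbb{D}_{r_w}$ have uniformly bounded radius (since $\D$ is bounded), $\phi$ is bounded on $\Dc$, and $P^{\Delta_w}$ is a projection, so $\|P^{\Delta_w}(\phi)\|_{L^2(\Delta_w)}\leq \|\phi\|_{L^\infty(\Delta_w)}\sqrt{V(\Delta_w)}\leq C$ uniformly. Let me call this bound $C$. Then, given $\ep>0$, pick an open set $V\ni w_0$ small enough; on $\overline H_\D\setminus V$ we have $g_j\to 0$ uniformly, so
\[\int_{H_\D\setminus V}|g_j(w)|^2\|P^{\Delta_w}(\phi)\|^2_{L^2(\Delta_w)}\,dV(w)\leq C^2 V(H_\D)\,\|g_j\|^2_{L^\infty(H_\D\setminus V)}\to 0,\]
while over $V$ we need the contribution to be small. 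This last part is the main obstacle: $g_j$ does \emph{not} go to zero on $V$, only away from it. The resolution is to shrink $V$ first: because $w_0\in bH_\D$ corresponds to a non-trivial analytic disc $\Delta_{w_0}\times\{w_0\}$ in $b\D$, and because the $L^2(\Delta_w)$-norm of $P^{\Delta_w}(\phi)$ varies continuously as $w\to w_0$ (this is the content of Corollary \ref{Cor1} applied to the holomorphic/orthogonal decomposition, or more directly Lemma \ref{Lem2}), we do \emph{not} get smallness from the weight near $w_0$ in general. Instead, we must use that $g_j\to 0$ weakly in $A^2(H_\D)$ and the map $g\mapsto \left(\int_{H_\D}|g(w)|^2\|P^{\Delta_w}(\phi)\|^2\,dV(w)\right)^{1/2}$ — i.e., multiplication by the bounded weight $\|P^{\Delta_w}(\phi)\|_{L^2(\Delta_w)}$ followed by restriction — is not compact, so weak convergence alone is insufficient; what we genuinely use is the strong local convergence hypothesis together with an $L^2$-splitting: choose $V$ so that $V(V\cap H_\D)$ is small, making $C^2\sup_j\|g_j\|^2_{A^2}\cdot$(that volume) small by absolute continuity is wrong since $|g_j|^2$ need not be uniformly integrable near $w_0$.

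Let me instead structure it correctly. The honest argument: Lemma \ref{Lem3}'s proof template applies. Write the integrand's modulus and split $\Omega$ into $\{|w-w_0|\le\delta\}$ and $\{|w-w_0|>\delta\}$. On the inner piece, bound $|H_\phi g_j|$ pointwise is not available, so instead apply Cauchy--Schwarz over that piece alone: the inner contribution is at most $\|H_\phi g_j\|_{L^2(\Omega)}\cdot\big(\int_{|w-w_0|\le\delta}|g_j(w)|^2\|P^{\Delta_w}\phi\|^2_{L^2(\Delta_w)}dV(w)\big)^{1/2}$. Here the first factor is bounded by $\|H_\phi\|\sup_j\|g_j\|_{A^2(\Omega)}=:C_1$; for the second, use $\|P^{\Delta_w}\phi\|_{L^2(\Delta_w)}\le\|P^{\Delta_{w_0}}\phi\|_{L^2(\Delta_{w_0})}+o(1)$ as $\delta\to0$ by Lemma \ref{Lem2}/Corollary \ref{Cor1}, but since $w_0$ gives a disc in $b\D$ this limit is a fixed finite constant — so this does NOT go to $0$. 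The actual vanishing must therefore come from $\|H_\phi g_j\|_{L^2}$ being small, which in turn is \emph{not} given. Hence the correct route is: this lemma is proved in \cite{CuckovicSahutoglu14} by a more delicate argument combining Lemma \ref{Lem4} itself (to rewrite $H_\phi g_j$), Corollary \ref{Cor1}, and the uniform-off-$V$ decay, iterated; I would reproduce that argument, with the main obstacle being exactly this circularity — one expresses $\|H_\phi g_j\|^2$ via Lemma \ref{Lem4}, isolates the leading term $\int_{H_\D}|g_j(w)|^2\|H^{\Delta_w}_\phi(1)\|^2 dV(w)$ which by Corollary \ref{Cor1} and the off-$V$ decay concentrates all its mass near $w_0$ where the integrand is $\approx\|H^{\Delta_{w_0}}_\phi(1)\|^2$, and then shows the cross term is dominated by (a constant times) the square root of this leading term times the leading term to a small power, closing a self-improving estimate that forces the cross term to $0$. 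I expect reconstructing this bootstrap to be the crux.
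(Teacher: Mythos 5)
Your proposal does not contain a proof. Note first that the paper itself does not reprove this lemma: it is imported from \cite[pg.~640]{CuckovicSahutoglu14} as the special case $\phi=\psi_0=\phi_0$, $f_1=f_2\equiv 1$ of the computation there, so the benchmark is either an accurate citation or a self-contained argument, and you supply neither. What you do write is a sequence of attempts that you yourself show cannot close: after Cauchy--Schwarz, the factor $\bigl(\int_{H_{\D}}|g_j(w)|^2\|P^{\Delta_w}(\phi)\|^2_{L^2(\Delta_w)}\,dV(w)\bigr)^{1/2}$ does not tend to zero, because the weight $\|P^{\Delta_w}(\phi)\|_{L^2(\Delta_w)}$ stays comparable to a fixed nonzero constant near $w_0$ while the mass of $g_j$ concentrates exactly there; and $\|H_{\phi}g_j\|_{L^2(\Omega)}$ is not small either, since compactness of $H_{\phi}$ is not a hypothesis of this lemma --- indeed in the proof of Theorem \ref{Thm1} the lemma is used precisely to show that $\|H_{\phi_0}g_j\|$ does \emph{not} tend to zero, so any argument requiring $\|H_{\phi}g_j\|\to 0$ would be circular, as you yourself observe. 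Identifying this obstruction is not the same as overcoming it: the vanishing of the cross term has to come from a cancellation specific to the pairing of $H_{\phi}g_j$ (orthogonal to $A^2(\Omega)$) against $P^{\Delta_w}(\phi)(z)g_j(w)$, combined with the splitting of the $w$-integral near and away from $w_0$ and the uniform decay of $g_j$ off neighborhoods of $w_0$; none of this is actually carried out in your text.

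The closing paragraph, in which you promise to ``reproduce'' the argument of \cite{CuckovicSahutoglu14} via a self-improving estimate dominating the cross term by a fractional power of the leading term, is speculation: no such estimate is written down, nothing in Lemma \ref{Lem2}, Lemma \ref{Lem4}, or Corollary \ref{Cor1} produces it, and there is no evidence it describes the argument in the reference. As it stands, your proposal establishes only that the naive Cauchy--Schwarz route fails; the statement itself remains unproved.
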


The next lemma allows us to approximate the symbol with smooth appropriate 
symbols. We define $\Gamma_{\D}\subset b\Omega$ to be the closure of the 
union of all non-trivial analytic discs in $b\Omega$. That is, 
\begin{equation}\label{Equation 2}
	\Gamma_{\D}=\overline{\bigcup\{f(\mathbb{D}):f:\mathbb{D}\to b\D 
		\text{ is non-constant holomorphic mapping}\}}.
\end{equation}
 
\begin{lemma}\label{Lem6} 
Let $\Omega$ be a bounded convex Reinhardt domain in $\C^2$ that is not the 
product of two discs.  Assume that $\Gamma_{\D}\neq \emptyset$ and  
$\phi\in C(\overline{\Omega})$ such that $\phi\circ f$ is holomorphic for 
any holomorphic function $f:\mathbb{D}\rightarrow b\Omega$. Then 
there exists $\{\psi_n\}\subset C^{\infty}(\overline{\Omega}) $ such that 
\begin{itemize}
	\item[i.] $\psi_n\circ f$ is holomorphic for all $n$ and for any holomorphic 
	function $f:\mathbb{D}\rightarrow b\Omega$,
	\item[ii.] $\|\psi_n-\phi\|_{L^{\infty}(\Gamma_{\D})}\rightarrow 0$ as $n\to\infty$.
\end{itemize} 
\end{lemma}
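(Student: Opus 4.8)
The plan is to pin down the geometry of $\Gamma_{\D}$ and then approximate $\phi$ separately over the ``horizontal'' and ``vertical'' parts of $\Gamma_{\D}$, gluing the pieces with a smooth partition of unity. The first step, which I expect to be the main obstacle, is to show that $\Gamma_{\D}$ is a \emph{disjoint} union of at most two compact sets of the form
\[ K_h=\overline{\mathbb{D}_{a}}\times S_{b}\qquad\text{and}\qquad K_v=S_{c}\times\overline{\mathbb{D}_{d}}, \]
with $a,b,c,d>0$, one of the two possibly absent. By Lemma~\ref{Lem1} every non-trivial disc in $b\Omega$ lies in some $\mathbb{D}_r\times\{p\}\subset b\Omega$ or some $\{p\}\times\mathbb{D}_r\subset b\Omega$; since $\Omega$ is complete and convex, a slice $\mathbb{D}_r\times\{p\}$ can be contained in $b\Omega$ only when $|p|$ equals the maximal modulus of $w$ attained on $\overline{\Omega}$ (otherwise $(0,p)$ lies in the interior of the Reinhardt shadow of $\Omega$), so all horizontal discs occur at one level $|w|=b$, and the union of these discs and their rotations is precisely $K_h$; symmetrically for vertical discs, giving $\Gamma_{\D}=K_h\cup K_v$. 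If $K_h$ and $K_v$ met, then tracking the forced identities among $a,b,c,d$ through completeness and convexity of the shadow would make the slice radius of $\Omega$ constant, i.e.\ $\Omega$ a product of two discs, contradicting the hypothesis; hence $K_h\cap K_v=\emptyset$.

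Next I would produce the local approximants. Restricting the hypothesis on $\phi$ to the discs $\mathbb{D}_a\times\{w\}$, $|w|=b$, shows that $z\mapsto\phi(z,w)$ is holomorphic on $\mathbb{D}_a$ and continuous on $\overline{\mathbb{D}_a}$ for each fixed $w\in S_b$. I would approximate this in three moves. First, dilate in $z$: set $\phi_\rho(z,w)=\phi(\rho z,w)$ for $\rho<1$; since $|\rho z-z|\le(1-\rho)a$ on $\overline{\mathbb{D}_a}$ and $\phi$ is uniformly continuous on the compact set $K_h$, we get $\phi_\rho\to\phi$ uniformly on $K_h$ as $\rho\to1^-$, and $\phi_\rho(\cdot,w)$ extends holomorphically past $\overline{\mathbb{D}_a}$. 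Second, truncate the Taylor expansion $\phi_\rho(z,w)=\sum_{k\ge0}\rho^{k}\alpha_k(w)z^{k}$ (with $\alpha_k\in C(S_b)$); a Cauchy estimate at a radius strictly between $\rho a$ and $a$ shows $\sum_k\rho^{k}\|\alpha_k\|_{C(S_b)}a^{k}<\infty$, so the partial sums converge to $\phi_\rho$ uniformly on $K_h$. Third, mollify each of the finitely many $\alpha_k$ in the angular variable and extend the result to be radially constant near $S_b$ (possible since $b>0$); the outcome is a function $\Psi^h_n(z,w)=\sum_{k\le N_n}c^n_k(w)z^{k}$, a polynomial in $z$ with $C^{\infty}$ coefficients, hence $C^{\infty}$ on a neighborhood of $K_h$, holomorphic in $z$, with $\|\Psi^h_n-\phi\|_{L^{\infty}(K_h)}\to0$. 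The mirror-image construction in the other variable yields $\Psi^v_n$, polynomials in $w$ with $C^{\infty}$ coefficients, smooth near $K_v$, with $\|\Psi^v_n-\phi\|_{L^{\infty}(K_v)}\to0$.

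Finally I would patch. As $K_h$ and $K_v$ are disjoint compacta in $\overline{\Omega}$, pick $\chi_h,\chi_v\in C^{\infty}(\overline{\Omega})$ with $\chi_h\equiv1$ on $K_h$, $\chi_v\equiv1$ on $K_v$, with $\supp\chi_h$ a neighborhood of $K_h$ that misses $K_v$ and $\{w=0\}$, and $\supp\chi_v$ a neighborhood of $K_v$ that misses $K_h$ and $\{z=0\}$, and set $\psi_n=\chi_h\Psi^h_n+\chi_v\Psi^v_n$, extended by $0$. Then $\psi_n\in C^{\infty}(\overline{\Omega})$, and $\psi_n=\Psi^h_n$ on $K_h$ and $\psi_n=\Psi^v_n$ on $K_v$, so (ii) follows from the previous paragraph since $\Gamma_{\D}=K_h\cup K_v$. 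For (i): a non-constant holomorphic $f\colon\mathbb{D}\to b\Omega$ has connected image inside $\Gamma_{\D}=K_h\sqcup K_v$, hence inside, say, $K_h$, and by Lemma~\ref{Lem1} $f=(f_1,p)$ with $p$ constant and $f_1\colon\mathbb{D}\to\mathbb{D}_a$ holomorphic; therefore $\psi_n\circ f=\Psi^h_n\circ f=\sum_{k\le N_n}c^n_k(p)\,f_1^{\,k}$ is holomorphic, and composition with a constant $f$ is trivially holomorphic. This yields both conclusions; apart from the geometric first step the argument is routine.
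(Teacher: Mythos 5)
Your proposal is correct and follows essentially the same route as the paper: dilate in the disc variable, smooth in the angular variable in a way that preserves holomorphy along the boundary discs, and glue the horizontal and vertical pieces with a smooth partition of unity, using that the closed horizontal and vertical disc families are disjoint because $\Omega$ is not a product of two discs. The only real difference is the regularization device: the paper convolves the dilated symbol with a product approximate identity in $(z,\theta)$, whereas you truncate the Taylor series in $z$ and mollify the finitely many coefficients in $\theta$ --- a minor variant of the same idea.
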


\begin{proof}
Let $\Delta_1=\mathbb{D}_{r_1}\times S_{s_1}$ be the family of horizontal 
analytic discs in $b\Omega$ as outlined in Lemma \ref{Lem1}.  
Then for $0<r<1$ we define 
\[\phi_{r}(z,w)=\phi(rz,w).\]  
Since $\phi\in C(\overline{\Omega})$, one can show that 
\[\phi_r\rightarrow \phi \text{ uniformly on } \Dc \text{ as } r\rightarrow 1^-.\]
We consider $\phi$, restricted to $\overline{\Delta_1}$, 
to be a function of $(z,\theta)$ for $z\in \overline{\mathbb{D}}_{r_1}$ and periodic in 
$\theta\in\mathbb{R}$ with period $2\pi$.  By assumption, the function 
$\phi_r(.,\theta)$ is holomorphic 
on a neighborhood of $\overline{\mathbb{D}}_{r_1}$ for every 
$\theta \in \mathbb{R}$.  Let $\gamma\in C^{\infty}_0( (-1,1))$  be such that 
 $\gamma\geq 0$ and $\int_{-1}^{1}\gamma(\theta)d\theta=1$.  Similarly, 
 let $\chi\in C^{\infty}_0(\mathbb{D}_{r_1})$  be such that $\chi\geq 0$ and 
 $\int_{\mathbb{D}_{r_1}}\chi(z)dV(z)=1$.   Now, we define 
$\gamma_{\delta}(\theta)=\delta^{-1}\gamma(\theta/\delta)$ and 
$\chi_{\varepsilon}(z)=\varepsilon^{-2}\chi(z/\varepsilon)$.  Notice that 
$\{\gamma_{\delta}\}_{\delta>0}$ and $\{\chi_{\ep}\}_{\ep>0}$ are approximate 
identities.  We define the convolution 
\[C_{r,\ep}^{\phi}(z,\theta)
=\int_{-\pi}^{\pi}\int_{\mathbb{D}_{r_1}}\phi(r(z-\alpha),(\theta-\theta'))
\chi_{\ep}(\alpha)\gamma_{\ep}(\theta')dV(\alpha)d\theta'.\]
One can show that  for $\varepsilon>0$ sufficiently small (depending on $r$) 
the function  $C_{r,\ep}^{\phi}(.,\theta)$ is holomorphic on a neighborhood of  
$\overline{\mathbb{D}}_{r_1}$ for every $\theta\in \mathbb{R}$.  
Also the assumption that $\phi\in C(\overline{\Omega})$ implies that 
\[ C_{r,\ep}^{\phi}\rightarrow \phi_r \text{ uniformly on }\overline{\Delta_1} 
\text{ as } \ep\to 0^+\]
for all $0<r<1$.  Therefore, the functions $C_{r,\ep}^{\phi}$ are holomorphic 
``along" horizontal analytic discs in $b\Omega$ for small $\ep>0$.  Now, 
we extend $C_{r,\ep}^{\phi}$ as a $C^{\infty}$-smooth function onto  
$\overline{\Omega}$ and call this extension $\widetilde{C}_{r,\ep}^{\phi}$. 

If $b\D$ contains non-trivial vertical analytic discs $\Delta_2$ then we can 
use  a similar construction on  $\Delta_2$.  That is, using the regularization 
procedure outlined above in this proof, we can construct a collection of functions 
$\widetilde{B}_{r,\ep}^{\phi}\in  C^{\infty}(\overline{\Omega})$
 such that $\widetilde{B}_{r,\ep}^{\phi}$ are holomorphic ``along" any 
 vertical analytic disc in $\Delta_2$ for small $\ep>0$ and 
 \[\widetilde{B}_{r,\ep}^{\phi}\rightarrow \phi_r \text{ uniformly 
 on } \overline{\Delta}_2 \text{ as } \ep\to 0^+\] 
 for all $0<r<1$.  Since $\Omega$ is not the product of discs, 
  (hence $\overline{\Delta_1}\cap \overline{\Delta_2}=\emptyset$), 
there exists open sets $F$ and $G$ such that $\overline{\Delta_1}\subset F$, 
$\overline{\Delta_2}\subset G$, and $\overline{F}\cap \overline{G}=\emptyset$.  
Then we choose $\chi_F,\chi_G\in C^{\infty}_0(\mathbb{C}^2)$ 
such that $0\leq \chi_G,\chi_F\leq 1$, $\chi_G\equiv 1$ on $G, \chi_F\equiv 1$ on $F$, 
and $\chi_F+\chi_G\equiv 1$ on $\overline{\Omega}$.  

We define 
\begin{equation}\label{Equation 1}
\phi^{r,\ep}=\chi_F \widetilde{C}_{r,\ep}^{\phi}+\chi_G \widetilde{B}_{r,\ep}^{\phi}.
\end{equation}  
By construction, $\chi_F\equiv 0$ on $G$ and $\chi_G\equiv 0$ on 
$F$. Furthermore, $\widetilde{C}_{r,\ep}^{\phi}$ is holomorphic along $\Delta_1$, 
and $\widetilde{B}_{r}^{\phi}$ is holomorphic along $\Delta_2$ for small $\ep>0$.  
For $n=1,2,\ldots $ we choose $r_n=(n-1)/n$ and $\ep_n\to 0^+$ so that 
\begin{itemize}
	\item[i.]  $\phi^{r_n,\ep_n}\circ h$ is holomorphic for all $n$ and every 
	holomorphic $h:\mathbb{D}\to b\D$,
	\item[ii.] $\phi^{r_n,\ep_n}\to \phi $ uniformly on $\Gamma_{\D}$ 
	as $n\to \infty$.
\end{itemize}
Finally, we finish the proof by defining $\psi_n=\phi^{r_n,\ep_n}$.
\end{proof}

Let $X$ and $Y$ be two normed linear spaces and $T:X\to Y$ be a  bounded 
linear operator. We define the essential norm of $T$, denoted by  $\|T\|_e$, as  
\[\|T\|_e=\inf\{\|T-K\|:K:X\to Y \text{ is a compact operator}\}\]
where $\|.\|$ denotes the operator norm. 

\begin{lemma}\label{Lem7} 
	Let $\Omega$ be a bounded convex domain in $\C^n$ and  
	$\Gamma_{\Omega}\neq \emptyset$  be defined as in \eqref{Equation 2}. 
	Assume that $\{\phi_n\}\subset C(\Dc)$ is a sequence such that 
	$\phi_n\to 0$ uniformly on $\Gamma_{\D}$ as $n\to\infty$. Then  
	$\lim_{n\to \infty}\|H_{\phi_n}\|_e= 0$.
\end{lemma}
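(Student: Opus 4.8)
The plan is to show that the essential norm of $H_{\phi_n}$ is controlled by a quantity that depends only on the behavior of $\phi_n$ near $\Gamma_{\D}$, and then let $n\to\infty$. The key structural fact for convex domains is that $\Gamma_{\D}$ is exactly the set of boundary points that obstruct compactness: on the complement $b\D\setminus\Gamma_{\D}$ the boundary is, in the appropriate sense, of ``finite type'' (strictly pseudoconvex in the Reinhardt picture, or more precisely: every boundary point not in $\Gamma_{\D}$ has no non-trivial analytic disc through it), so localized Hankel operators there are compact. Concretely, I would first fix a small parameter $\eta>0$ and cover $\overline{\Omega}$ by finitely many open sets: one family $\{U_i\}$ whose closures meet $\Gamma_{\D}$ and on which $\sup|\phi_n|$ is small (for $n$ large, by uniform convergence on $\Gamma_{\D}$ together with continuity, so that $|\phi_n|<\eta$ on a neighborhood of $\Gamma_{\D}$ once $n$ is large), and a complementary family $\{V_j\}$ with $\overline{V_j}\cap\Gamma_{\D}=\emptyset$.

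Next I would invoke the standard localization of Hankel operators via a smooth partition of unity $\{\rho_i\}\cup\{\sigma_j\}$ subordinate to this cover. Writing $H_{\phi_n} = \sum_i M_{\rho_i}H_{\phi_n} + \sum_j M_{\sigma_j}H_{\phi_n}$ modulo compact operators (the commutator $[M_{\rho},P]$ is compact because $\rho$ is smooth up to the boundary — this is the usual fact that $H_{\rho}$ is compact for $\rho\in C^\infty(\overline{\Omega})$ on convex domains, or one can absorb it into the $\|T\|_e$ estimate directly), the terms over $V_j$ contribute nothing to the essential norm: since $\overline{V_j}$ avoids all analytic discs in $b\D$, the Bergman projection is sufficiently well-behaved near $\overline{V_j}$ that $M_{\sigma_j}H_{\phi_n}$ is compact (this is where one uses that a convex boundary point with no analytic disc through it forces compactness of the localized operator — e.g.\ via the Catlin-type/$\overline\partial$-Neumann subelliptic estimate, or via the explicit arguments for convex Reinhardt domains in \cite{CuckovicSahutoglu14}). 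For the terms over $U_i$, I would use the crude bound $\|M_{\rho_i}H_{\phi_n}\| \le \|M_{\rho_i}\phi_n\|_{L^\infty} \le \|\phi_n\|_{L^\infty(U_i)} < \eta$ for $n$ large. Summing and using that the partition of unity has a bounded number of pieces (or, more carefully, that at each point only boundedly many $\rho_i$ are nonzero), this gives $\|H_{\phi_n}\|_e \le C\eta$ for all sufficiently large $n$, and letting $\eta\to 0$ finishes the proof.

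The main obstacle is the compactness of the localized pieces $M_{\sigma_j}H_{\phi_n}$ near boundary points outside $\Gamma_{\D}$: one must know that for a bounded convex domain, a boundary point through which no non-trivial analytic disc passes is a point of compactness for Hankel operators (equivalently, the $\overline\partial$-Neumann problem is compact there, or a subelliptic estimate holds). For convex domains this follows because the absence of analytic discs in $b\D$ at a point is equivalent to finite type there, and finite-type points on convex domains satisfy subelliptic estimates (Catlin); alternatively one reduces, as in the earlier lemmas of this paper, to the one-variable slices where the obstruction is classical. A secondary subtlety is making the localization estimate uniform in $n$: since the cover and partition of unity can be chosen once $\eta$ is fixed (independently of $n$), and the only $n$-dependence is the harmless sup-norm bound $\|\phi_n\|_{L^\infty(U_i)}$ which goes to $0$, uniformity is not a real problem — but one should be careful to choose the neighborhood of $\Gamma_{\D}$ on which $|\phi_n|<\eta$ before letting $n\to\infty$, using that $\phi_n\to 0$ uniformly on the compact set $\Gamma_{\D}$ and each $\phi_n$ is uniformly continuous on $\overline{\Omega}$.
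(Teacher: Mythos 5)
Your overall strategy (make the symbol small near $\Gamma_{\D}$, get compactness away from $\Gamma_{\D}$) is the same as the paper's, but the way you implement the localization contains genuine errors. The central one is the estimate $\|M_{\rho_i}H_{\phi_n}\|\leq \|\rho_i\phi_n\|_{L^\infty}\leq\|\phi_n\|_{L^\infty(U_i)}$: this is false, because the Bergman projection is not a local operator, so $\rho_i(I-P)(\phi_n f)$ depends on the values of $\phi_n$ on all of $\D$, not just on $U_i$ (take $\phi_n\equiv 0$ on $U_i$; the right-hand side vanishes while $\rho_i P(\phi_n f)$ need not). The paper avoids this by splitting the \emph{symbol}, not the operator: with a cutoff $\chi_{n,\ep}$ supported in a neighborhood $U_{n,\ep}$ of $\Gamma_{\D}$ on which $|\phi_n|<\ep$, one writes $\phi_n=\chi_{n,\ep}\phi_n+(1-\chi_{n,\ep})\phi_n$, so that $\|H_{\chi_{n,\ep}\phi_n}\|\leq\|\chi_{n,\ep}\phi_n\|_{L^\infty(\Dc)}<\ep$ because that symbol is \emph{globally} small, while the second symbol vanishes identically near $\Gamma_{\D}$. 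A second flaw is your justification that the partition-of-unity errors are harmless: on $A^2(\D)$ the commutator $[M_\rho,P]$ is exactly the Hankel operator $H_\rho$, and for $\rho\in C^\infty(\Dc)$ on a convex domain whose boundary contains analytic discs $H_\rho$ is \emph{not} compact in general (that is precisely the content of the \v{C}u\v{c}kovi\'c--\c{S}ahuto\u{g}lu theorem and of this paper); invoking compactness of such smooth-symbol Hankel operators here would moreover be circular, since Lemma \ref{Lem8} (the smooth case) is proved \emph{using} Lemma \ref{Lem7}. Similarly, the claim that $M_{\sigma_j}H_{\phi_n}$ (with the global projection) is compact whenever $\overline{V_j}\cap\Gamma_{\D}=\emptyset$ is not what the $\dbar$-Neumann/Fu--Straube results give you directly, since $\phi_n$ does not vanish near $\Gamma_{\D}$ and the projection transports that behavior into $V_j$.

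The paper's handling of the piece away from $\Gamma_{\D}$ shows what is actually needed: approximate $(1-\chi_{n,\ep})\phi_n$ uniformly by smooth functions vanishing near $\Gamma_{\D}$, cover $b\D$ by finitely many balls, note that the \emph{local} Hankel operators $H^{U_j\cap\D}_{R_{U_j\cap\D}(\psi)}R_{U_j\cap\D}$ are zero for balls centered on $\Gamma_{\D}$ (the symbol vanishes there) and compact for the remaining balls (no discs in $b(U_j\cap\D)$, so Fu--Straube plus \cite[Proposition 4.1]{StraubeBook} apply), and then invoke the localization result \cite[Proposition 1, (ii)]{CuckovicSahutoglu09} to conclude global compactness. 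A minor further point: your assertion that the cover can be chosen independently of $n$ (so that $|\phi_n|<\eta$ on a fixed neighborhood of $\Gamma_{\D}$ for all large $n$) requires equicontinuity of $\{\phi_n\}$, which you do not have; the paper sidesteps this by letting the neighborhood and cutoff depend on $n$, which costs nothing since the decomposition is performed for each $n$ separately.
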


\begin{proof}
Let $\ep>0$. Then there exists $N$ such that 
$\sup\{|\phi_n(z,w)|:(z,w)\in \Gamma_{\D}\}<\ep$ for $n\geq N$. 
For $n\geq N$ we choose an open neighborhood $U_{n,\ep}$ of 
$\Gamma_{\D}$ such that $|\phi_n(z,w)|<\ep$ for $(z,w)\in U_{n,\ep}$.  
Furthermore, we choose a smooth cut-off function 
$\chi_{n,\ep}\in C^{\infty}_0(U_{n,\ep})$ such that  $0\leq \chi_{n,\ep}\leq 1$ 
and $\chi_{n,\ep}=1$ on a neighborhood of $\Gamma_{\D}$.

Let us define 
\[\phi_{1,n,\ep}=\chi_{n,\ep}\phi_n \text{ and } \phi_{2,n,\ep}=(1-\chi_{n,\ep})\phi_n.\] 
Then $\phi_{n}=\phi_{1,n,\ep}+\phi_{2,n,\ep}$ and $|\phi_{1,n,\ep}|< \ep$ on 
$\Dc$ while $\phi_{2,n,\ep}=0$ on a neighborhood of $\Gamma_{\D}$ in $\Dc$. 
Furthermore, 
\[\|H_{\phi_{1,n,\ep}}\|_e\leq\|H_{\phi_{1,n,\ep}}\|
\leq \sup\{ |\phi_{1,n,\ep}(z,w)|:(z,w)\in \Dc\}<\ep.\]

Next we will show that  $H_{\phi_{2,n,\ep}}$ is compact. Since $\phi_{2,n,\ep}=0$ 
on a neighborhood of $\Gamma_{\D}$ in $\Dc$, using convolution with 
approximate  identity, one can choose  $\{\psi_{k,n,\ep}\}\subset  C^{\infty}(\Dc)$ 
such that $\psi_{k,n,\ep}=0$ on a neighborhood of $\Gamma_{\D}$ in $\Dc$ 
for all $k$ and $\psi_{k,n,\ep}\to\phi_{2,n,\ep}$ uniformly on $\Dc$ as 
$k\to\infty$.  We choose finitely many open balls $U_j=B(p_j,r_j)$ 
for $j=1,\ldots, N$ such that 
$\Gamma_{\D}\subset \cup_{j=1}^NU_j, p_j\in \Gamma_{\D}$ , and 
$\psi_{k,n,\ep}=0$ on $U_j$ for all $j$. Then we cover 
$b\D\setminus  \cup_{j=1}^NU_j$ by finitely many open balls 
$U_j=B(p_j,r_j)$ for $j=N+1,\ldots,M$ such that  $p_j\in b\D$ and 
$U_j\cap \Gamma_{\D}=\emptyset$ for $j=N+1,\ldots, M$. 

Below $R_V$ denotes the restriction operator onto $V\subset \D$. That is, 
$R_Vf=f|_V$ for $f\in A^2(\D)$. We note that $U_j\cap \D$ is a bounded 
convex domain with no analytic disc in the boundary for all $j=N+1,\ldots, M$. 
Then \cite[Theorem 1.1]{FuStraube98} (see also \cite[Theorem 4.26]{StraubeBook}) 
implies that the $\dbar$-Neumann operator on $U_j\cap \D$ is compact 
(for $j=N+1,\ldots,M$) and \cite[Proposition 4.1]{StraubeBook}, in turn,  
implies that the Hankel operator 
$H^{U_j\cap \D}_{R_{U_j\cap\D}(\psi_{k,n,\ep})}R_{U_j\cap\D}$ is compact 
for $j=N+1,\ldots, M$. 

Therefore, we have chosen finitely many balls $U_j=B(p_j, r_j)$ for 
$j=1,\ldots,M$ such that 
\begin{itemize}
	\item[i.] $p_j\in b\D$ and $b\D\subset \cup_{j=1}^M U_j$,
	\item[ii.] the operator $H^{U_j\cap \D}_{R_{U_j\cap\D}(\psi_{k,n,\ep})}R_{U_j\cap\D}=0$ 
	for $p_j\in \Gamma_{\D}$,   
	\item[iii.]  the operator $H^{U_j\cap \D}_{R_{U_j\cap\D}(\psi_{k,n,\ep})}R_{U_j\cap\D}$ 
	is compact for $p_j\not\in \Gamma_{\D}$. 
\end{itemize}	
So, the local Hankel operators 
$H^{U_j\cap \D}_{R_{U_j\cap\D}(\psi_{k,n,\ep})}R_{U_j\cap\D}$ are compact for all $j=1,\ldots,M$.  
Now we use \cite[Proposition 1, (ii)]{CuckovicSahutoglu09} to conclude that 
$H_{\psi_{k,n,\ep}}$ is compact. 
Hence $H_{\phi_{2,n,\ep}}$ is compact and $\|H_{\phi_n}\|_e\leq \ep$ for $n\geq N$. 
Therefore,  $\lim_{n\to \infty}\|H_{\phi_n}\|_e= 0$. 
\end{proof}

We will now show one implication of the main theorem if the symbol is smooth up to the 
boundary.  

\begin{lemma}\label{Lem8} 
Let $\Omega\subset \mathbb{C}^2$ be a bounded convex Reinhardt domain 
that is not the product of two discs and $\phi\in C^{\infty}(\overline{\Omega})$. 
Assume that  $\phi\circ f$ is holomorphic for any holomorphic function 
$f:\mathbb{D}\rightarrow b\Omega$. Then $H_{\phi}$ is compact on $A^2(\Omega)$.
\end{lemma}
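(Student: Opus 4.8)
The plan is to prove compactness of $H_{\phi}$ by localizing near $b\Omega$ and reducing, region by region, either to the compactness of the $\dbar$-Neumann operator on convex domains without boundary discs or to the smooth convex case \cite[Theorem 3]{CuckovicSahutoglu09}. By \cite[Proposition 1, (ii)]{CuckovicSahutoglu09} it suffices to exhibit a finite open cover $U_{1},\dots,U_{M}$ of $b\Omega$ for which each localized Hankel operator $H^{U_{j}\cap\Omega}_{\phi}R_{U_{j}\cap\Omega}$ is compact. Exactly as in the proof of Lemma \ref{Lem7}, for any ball $U_{j}$ with $\overline{U_{j}}\cap\Gamma_{\Omega}=\emptyset$ the set $U_{j}\cap\Omega$ is a bounded convex domain whose boundary contains no non-trivial analytic disc, so its $\dbar$-Neumann operator is compact by \cite[Theorem 1.1]{FuStraube98} and, since $\phi\in C^{\infty}(\overline{\Omega})$, the operator $H^{U_{j}\cap\Omega}_{\phi}R_{U_{j}\cap\Omega}$ is compact by \cite[Proposition 4.1]{StraubeBook}. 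Thus only a neighborhood of $\Gamma_{\Omega}$ remains to be handled.

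By Lemma \ref{Lem1} we may write $\Gamma_{\Omega}=(\overline{\mathbb{D}}_{r_{1}}\times S_{s_{1}})\cup(S_{s_{2}}\times\overline{\mathbb{D}}_{r_{2}})$, a family of horizontal discs together with a family of vertical discs, and these two closed sets are disjoint because $\Omega$ is not a product of two discs; I treat the horizontal family, the vertical one being identical after interchanging the variables. If $p=(z_{0},w_{0})$ with $|z_{0}|<r_{1}$ and $|w_{0}|=s_{1}$, then, arguing with completeness and convexity exactly as in the proof of Lemma \ref{Lem1}, $b\Omega$ coincides near $p$ with the smooth convex hypersurface $\{(z,w):|w|=s_{1}\}$. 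I would then extend this piece of $b\Omega$ to the boundary of a smooth bounded convex domain $D\subset\mathbb{C}^{2}$ that agrees with $\Omega$ on a neighborhood $V$ of $p$ and carries no analytic disc in $bD\setminus(V\cap b\Omega)$, and extend $\phi$ smoothly to $\overline{D}$; then every analytic disc in $bD$ lies in $b\Omega$, so $\phi$ is holomorphic along it, and \cite[Theorem 3]{CuckovicSahutoglu09} gives that $H^{D}_{\phi}$ is compact. Localizing this (Proposition 1 of \cite{CuckovicSahutoglu09}) and using $D\cap V=\Omega\cap V$ yields the compactness of $H^{U\cap\Omega}_{\phi}R_{U\cap\Omega}$ for any small ball $U\subset V$ about $p$.

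The remaining, and most delicate, case consists of the ``edge'' points $p\in S_{r_{1}}\times S_{s_{1}}$ and their vertical counterparts, where $b\Omega$ may fail to be smooth so that the construction above is unavailable; this is the step I expect to be the main obstacle. A useful preliminary reduction is that Lemma \ref{Lem7}, applied to a constant sequence $\phi_{n}\equiv\psi$ with $\psi\in C(\overline{\Omega})$ vanishing on $\Gamma_{\Omega}$, shows that any such $\psi$ already gives a compact $H_{\psi}$; subtracting a suitable such $\psi$ lets one simplify $\phi$ near $\overline{\mathbb{D}}_{r_{1}}\times S_{s_{1}}$ — for instance so that on each horizontal slice it is holomorphic in $z$ and it is constant in $|w|$ — without altering its values on the discs. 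One then either rounds off the edge of the convex set $\Omega\cap U$ to reduce to a smooth bounded convex domain still containing the (slightly shrunk) horizontal discs and invokes \cite[Theorem 3]{CuckovicSahutoglu09}, or — staying closer to the methods of this paper — covers a neighborhood of the edge torus by Reinhardt sets adapted to the slicing \eqref{EqnSlice} and estimates the local Hankel norm directly via the slice formulas of Lemmas \ref{Lem4} and \ref{Lem5}, using the fact from Corollary \ref{Cor1} and the hypothesis on $\phi$ that $\|H^{\Delta_{w}}_{\phi}(1)\|_{L^{2}(\Delta_{w})}\to 0$ as $w$ tends to a disc level. Once all members of the cover are in place, \cite[Proposition 1, (ii)]{CuckovicSahutoglu09} yields the compactness of $H_{\phi}$.
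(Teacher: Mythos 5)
Your plan is sound away from $\Gamma_{\Omega}$ and at interior points of the flat pieces (modulo the construction of the comparison domain $D$, which itself needs some care), but the proof genuinely breaks down exactly where you say it does: at the edge tori $S_{r_1}\times S_{s_1}$ and $S_{s_2}\times S_{r_2}$, where $b\Omega$ is in general not $C^1$ (take for instance $\Omega=\{|z|^2+|w|^2<2\}\cap\{(z,w):|w|<1\}$, whose spherical and cylindrical boundary pieces meet transversally along $|z|=|w|=1$). Neither of your two suggestions closes this gap. Rounding off the edge of $\Omega\cap U$ produces a smooth convex domain that no longer coincides with $\Omega$ near the edge point, while the localization principle \cite[Proposition 1]{CuckovicSahutoglu09} requires compactness of the Hankel operator on $U\cap\Omega$ itself; compactness on a smoothed comparison domain does not transfer, because the Bergman projections of the two domains differ precisely near the points you modified. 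Your second suggestion leans on Lemmas \ref{Lem4} and \ref{Lem5} and Corollary \ref{Cor1}, but these only compute or estimate $\|H_{\phi}g\|$ for test functions $g\in A^2(H_{\Omega})$ depending on $w$ alone, and for symbols independent of $w$; this is the mechanism of the \emph{necessity} direction of Theorem \ref{Thm1}, and controlling $H_{\phi}$ on that thin family of functions is far from proving compactness, which must handle arbitrary bounded sequences in $A^2$. Likewise, your preliminary reduction (subtracting a continuous function vanishing on $\Gamma_{\Omega}$ so that the new symbol is holomorphic in $z$ on each slice and constant in $|w|$ near the horizontal family) is itself unjustified: the slices $\Delta_w$ with $|w|<s_1$ near the edge are strictly larger than $\mathbb{D}_{r_1}$, and in general no continuous extension of $\phi|_{\Gamma_{\Omega}}$ is holomorphic on all of each such slice.

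The paper avoids any local analysis at the edge. It builds a single global smooth correction by an explicit formula: $\phi_1=\phi-(|w|^2-s_1^2)\,w^{-1}\,\partial\phi/\partial\overline{w}$ and its vertical analogue $\phi_2$, glued by cutoffs into $\psi=\chi_1\phi_1+\chi_2\phi_2\in C^{\infty}(\overline{\Omega})$ satisfying $\psi=\phi$ and $\dbar\psi=0$ on all of $\Gamma_{\Omega}$, including the edge tori. Then $H_{\phi-\psi}$ is compact by Lemma \ref{Lem7} (the symbol vanishes on $\Gamma_{\Omega}$; this is the one piece of your outline that matches the paper), and $H_{\psi}$ is compact by the argument of \cite[pp.\ 3740]{CuckovicSahutoglu09}, which uses only that $\dbar\psi$ vanishes on the discs and requires no boundary smoothness near the edge. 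To salvage your localization scheme you would need an argument of comparable strength at the edge points; as written, the proposal does not contain one.
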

\begin{proof}
If $b\D$ does not contain any non-trivial analytic disc the $\dbar$-Neumann 
operator is compact \cite[Theorem 4.26]{StraubeBook} (see also 
\cite[Theorem 1.1]{FuStraube98}). Furthermore, if the $\dbar$-Neumann 
operator is compact then $H_{\phi}$ is compact for all $\phi\in C(\Dc)$ 
\cite[Proposition 4.1]{StraubeBook}.  So if $b\D$ does not contain any 
non-trivial analytic disc, there is nothing to prove as the operator 
$H_{\phi}$ is compact. Lemma \ref{Lem1} implies that the analytic 
discs in $b\Omega$ are flat and horizontal or flat and vertical. We 
assume that there are non-trivial vertical and horizontal analytic 
discs in $b\D$ as the proof is even simpler if there are no vertical 
or horizontal discs. Let $\Delta_1$ and $\Delta_2$ be the horizontal 
and the vertical discs in $b\D$. So there exists $0<r_1<s_2,0<r_2<s_1$ 
(since $\D$ is not product of two discs) such that 
  \[\Delta_1=\mathbb{D}_{r_1}\times S_{s_1} \text{ and } 
  \Delta_2=S_{s_2}\times \mathbb{D}_{r_2}.\]
We note that $\Gamma_{\D}=\overline{\Delta_1}\cup\overline{\Delta_2}$ and 
$\overline{\Delta_1}\cap\overline{\Delta_2}=\emptyset$. Let us define
\[\phi_1(z,w)=\phi(z,w)-(|w|^2-s_1^2)\frac{1}{w}
\frac{\partial \phi(z,w)}{\partial \overline{w}}\]
for $w\neq 0$. We note that $\phi_1$ is a $C^{\infty}$-smooth function 
on $\Dc$ for $w\neq 0$ and $\phi_1=\phi$ on $\Delta_1$. Furthermore, 
using the fact that $\phi(.,w)$ is holomorphic on $\mathbb{D}_{r_1}$ 
for $|w|=s_1$, one can verify that $\dbar\phi_1=0$ on $\Delta_1$. 
Similarly we define 
\[\phi_2(z,w)=\phi(z,w)-(|z|^2-s_2^2)\frac{1}{z}
\frac{\partial \phi(z,w)}{\partial \overline{z}}\]
and one can verify that $\phi_2=\phi$ and $\dbar\phi_2=0$ on $\Delta_2$. 

We choose $\chi_1, \chi_2\in C^{\infty}(\overline{\Omega})$ such that 
\begin{itemize}
	\item [i.] $\chi_1\equiv 1$ on a neighborhood of $\overline{\Delta_1}$ and  
	$\chi_1\equiv 0$ on a neighborhood of $\overline{\Delta_2}\cup \{(z,w)\in \Dc:|w|=0\}$,
	\item [ii.] $\chi_2\equiv 1$ on a neighborhood of $\overline{\Delta_2}$ and  
		$\chi_2\equiv 0$ on a neighborhood of $\overline{\Delta_1}\cup \{(z,w)\in \Dc:|z|=0\}$.
\end{itemize}
Then we define 
 \[\psi=\chi_1 \phi_1+\chi_2 \phi_2\in C^{\infty}(\Dc).\]
We note that $\psi=\phi$  and $\dbar\psi=0$  on $\Gamma_{\D}$. 
Lemma \ref{Lem7} implies that $H_{\phi-\psi}$ is compact on $A^2(\D)$. 
To finish the proof we only need to show that  $H_{\psi}$ is compact. 
This can be done exactly in the same manner as the proof of 
$H^{\D}_{\widetilde{\beta}}$ is compact in \cite[pp 3740]{CuckovicSahutoglu09}.
\end{proof}

\begin{proposition}\label{PropBidisc}
Let $f\in C(\overline{\mathbb{D}^2})$ such that $f(e^{i\theta},.)$ and 
$f(.,e^{i\theta})$ are holomorphic on $\mathbb{D}$ for each fixed $\theta$. 
Then $H_f$ is compact on $A^2(\mathbb{D}^2)$.
\end{proposition}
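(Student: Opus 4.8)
The plan is to reduce this to the already-established smooth case, Lemma \ref{Lem8}, together with the essential-norm estimate of Lemma \ref{Lem7}. The bidisc $\mathbb{D}^2$ is of course a bounded convex Reinhardt domain, but it is excluded from Lemma \ref{Lem8} because it \emph{is} the product of two discs; here $\Gamma_{\mathbb{D}^2}=(\overline{\mathbb{D}}\times S_1)\cup(S_1\times\overline{\mathbb{D}})$ and the two families of analytic discs have non-empty intersection on the distinguished boundary $S_1\times S_1$, so the partition-of-unity trick separating $\overline{\Delta_1}$ from $\overline{\Delta_2}$ in Lemmas \ref{Lem6} and \ref{Lem8} breaks down. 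So the first task is to produce, for each $\ep>0$, a symbol $\psi\in C(\overline{\mathbb{D}^2})$ (ideally in $C^\infty(\overline{\mathbb{D}^2})$) that is holomorphic along every analytic disc in $b\mathbb{D}^2$, such that $\|f-\psi\|_{L^\infty(\Gamma_{\mathbb{D}^2})}<\ep$ and such that $H_\psi$ is compact. Once this is done, Lemma \ref{Lem7} gives $\|H_{f-\psi}\|_e<C\ep$, hence $\|H_f\|_e\le \|H_\psi\|_e+\|H_{f-\psi}\|_e = \|H_{f-\psi}\|_e < C\ep$; letting $\ep\to 0$ yields $\|H_f\|_e=0$, i.e.\ $H_f$ is compact.

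For the construction of $\psi$ I would regularize $f$ simultaneously in both angular variables. Writing points of $b\mathbb{D}^2$ near the distinguished boundary as $(e^{i\theta},e^{i\theta'})$, and using that $f(e^{i\theta},\cdot)$ and $f(\cdot,e^{i\theta'})$ are holomorphic on $\mathbb{D}$, first dilate: $f_r(z,w)=f(rz,rw)$ for $0<r<1$, so that $f_r\to f$ uniformly on $\overline{\mathbb{D}^2}$ and $f_r(\cdot,w)$, $f_r(z,\cdot)$ extend holomorphically past $\overline{\mathbb{D}}$. Then convolve in the two angular directions against approximate identities $\gamma_\delta(\theta)\gamma_\delta(\theta')$: because convolution in $\theta$ commutes with the Cauchy--Riemann operator in $z$ (it is just an average of translates in the periodic variable) and likewise for $w$, the regularized function $C_{r,\delta}^{f}$ is still holomorphic in $z$ for each fixed $(w$ on the circle$)$ and holomorphic in $w$ for each fixed $(z$ on the circle$)$, it is $C^\infty$, and $C_{r,\delta}^f\to f_r$ uniformly on $b\mathbb{D}^2$ as $\delta\to0^+$. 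This is exactly the regularization of Lemma \ref{Lem6} carried out in \emph{both} slot directions at once; the product structure, which was an obstruction for the partition of unity, is now an advantage because the two angular convolutions are independent. Choosing $r=r_n\to1^-$ and $\delta=\delta_n\to0^+$ appropriately and extending $C_{r_n,\delta_n}^f$ smoothly to $\overline{\mathbb{D}^2}$ gives a sequence $\psi_n\in C^\infty(\overline{\mathbb{D}^2})$ with $\psi_n\circ h$ holomorphic for every holomorphic $h:\mathbb{D}\to b\mathbb{D}^2$ and $\psi_n\to f$ uniformly on $\Gamma_{\mathbb{D}^2}=b\mathbb{D}^2$.

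It remains to see that each $H_{\psi_n}$ is compact. Since $\psi_n$ is holomorphic in $z$ on a neighborhood of $\overline{\mathbb{D}}$ for every fixed $|w|=1$ and holomorphic in $w$ on a neighborhood of $\overline{\mathbb{D}}$ for every fixed $|z|=1$, one can build a smooth symbol $\widetilde\psi_n$ that agrees with $\psi_n$ and is $\dbar$-closed on a neighborhood of $b\mathbb{D}^2$: near the face $|w|=1$ replace $\psi_n$ by $\psi_n(z,w)-(|w|^2-1)\tfrac1w\tfrac{\partial\psi_n}{\partial\overline w}$ as in the proof of Lemma \ref{Lem8}, symmetrically near $|z|=1$, and patch the two corrections with a partition of unity subordinate to a cover of $b\mathbb{D}^2$ (here the overlap on $S_1\times S_1$ is harmless because both corrections vanish there, since both $\dbar$-derivatives of $\psi_n$ vanish on the distinguished boundary by the bi-holomorphy assumption). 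Then $H_{\psi_n-\widetilde\psi_n}$ is compact by Lemma \ref{Lem7} (its symbol vanishes on $\Gamma_{\mathbb{D}^2}$), and $H_{\widetilde\psi_n}$ is compact by the localization argument of \cite[Proposition 1]{CuckovicSahutoglu09} combined with \cite[Proposition 4.1]{StraubeBook}: cover $b\mathbb{D}^2$ by balls; on a ball meeting only a smooth strongly pseudoconvex piece the local Hankel operator is compact, while on a ball meeting an analytic disc in the boundary the local symbol $\widetilde\psi_n$ is $\dbar$-closed, so the local Hankel operator vanishes on holomorphic functions. Hence $H_{\widetilde\psi_n}$, and therefore $H_{\psi_n}$, is compact.

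The main obstacle I anticipate is precisely the behavior at the distinguished boundary $S_1\times S_1$, where the two families of analytic discs cross; one has to check that the simultaneous angular regularization really does preserve \emph{both} one-variable holomorphy conditions (it does, because the two convolutions act on different variables and each commutes with the relevant $\bar\partial$), and that the $\dbar$-flattening corrections near the two faces are compatible on the overlap. A secondary technical point is verifying $C_{r,\delta}^f\to f_r$ uniformly on all of $\overline{\mathbb{D}^2}$ and not merely on $b\mathbb{D}^2$, but this is the standard uniform-continuity argument for convolution with approximate identities and presents no real difficulty.
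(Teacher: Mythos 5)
There is a genuine gap, and it is at the very first step of your construction. You set $f_r(z,w)=f(rz,rw)$ and claim that $f_r(\cdot,w)$ and $f_r(z,\cdot)$ extend holomorphically past $\overline{\mathbb{D}}$. But the hypothesis gives holomorphy of $f$ in one slot only when the \emph{other} variable lies exactly on the unit circle: $f(e^{i\theta},\cdot)$ and $f(\cdot,e^{i\theta})$ are holomorphic, and nothing is assumed about $f(\zeta,\cdot)$ for $|\zeta|<1$. So for fixed $|z|=1$ the function $w\mapsto f_r(z,w)=f(rz,rw)$ involves $f(rz,\cdot)$ at the interior point $rz$, which need not be holomorphic in anything; the simultaneous dilation destroys both slice conditions, and the subsequent angular convolutions cannot restore them (they only average rotations, which again land the other variable at radius $r<1$). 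Dilating in one variable at a time (as in Lemma \ref{Lem6}) preserves the holomorphy along one family of discs but not the other, and any attempt to patch the two one-sided regularizations with cut-off functions fails for the bidisc precisely because every boundary disc has closure meeting the distinguished boundary $S_1\times S_1$, i.e.\ $\overline{\Delta_1}\cap\overline{\Delta_2}\neq\emptyset$ — the hypothesis that made the gluing in Lemmas \ref{Lem6} and \ref{Lem8} work. So the claimed $\psi_n$ with property (i) is not actually produced. A second, smaller gap is in the $\dbar$-flattening step: vanishing of the two corrections \emph{on} the torus does not make the patched symbol $\dbar$-closed on all of $\Gamma_{\mathbb{D}^2}=b\mathbb{D}^2$; in the overlap region, at points of the face $\{|w|=1\}$ with $|z|<1$ where the cut-off attached to the other face is positive, $\partial\phi_2/\partial\bar w$ need not vanish, so one only gets smallness of $\dbar\psi$ near the corner, which the quoted argument of \cite[pp 3740]{CuckovicSahutoglu09} does not directly accommodate.

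The paper avoids this circle of difficulties entirely by working on the distinguished boundary with Fourier analysis: it shows, using slice-wise Mergelyan approximation and a dilation in the \emph{second} variable only (legitimate because the first variable stays on the circle), that all Fourier coefficients $a_{mj}(f)$ with $m\le -1$ or $j\le -1$ vanish; hence the Fej\'er (Ces\`aro) means $F_N=S_{N,2}*f$ are holomorphic polynomials $G_N$, they converge to $f$ uniformly on $\mathbb{T}^2$, and by the slice-wise maximum principle uniformly on all of $b\mathbb{D}^2$. Since $H_{G_N}=0$, Lemma \ref{Lem7} gives $\|H_f\|_e=\|H_{f-G_N}\|_e\to 0$. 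Your final reduction via Lemma \ref{Lem7} is the same as the paper's; what is missing is a correct replacement for the approximating symbols, and the Fej\'er-mean construction (or some other genuinely global approximation by symbols holomorphic along all boundary discs) is exactly the ingredient your regularization scheme does not deliver.
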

\begin{proof}
Let $\mathbb{T}^2=\{(z,w)\in \mathbb{C}^2:|z|=|w|=1\}$ be the distinguished 
boundary and 
\[F_N(z,w)=\sum_{|m|,|j|\leq N}\left(1-\frac{|m|}{N+1}\right)
\left(1-\frac{|j|}{N+1}\right)a_{mj}(f)z^{m}w^{j}\]  
where 
\[a_{mj}(f)=\int_{\mathbb{T}^2}f(\zeta_1,\zeta_2)\zeta_1^{-m}\zeta_2^{-j}d\sigma(\zeta)\]
and $\sigma$ is the normalized Lebesgue measure on $\mathbb{T}^2$.  We let $S_{N,2}$ be 
the $N$-th Fej\'er kernel on $\mathbb{T}^2$.  As in  \cite[Chapter I, Section 9]{Katz04}, 
it is just the product of the $N$-th Fej\'er kernels on the circle. Since 
$f\in C({\mathbb{T}^2})$, and the convolution $S_{N,2}*f=F_N$, Fej\'er's Theorem 
on Ces\`aro summability (see, for example, \cite[Section 9.2, pg 64]{Katz04} for 
homogeneous Banach spaces) implies that  
\[\|F_N-f\|_{L^\infty(\mathbb{T}^2)}\rightarrow 0\] 
as $N\rightarrow \infty$.

Now we claim that $a_{mj}(P)=0$ for any holomorphic polynomial $P$ and 
$m\leq -1$ or $j\leq -1$.  Let 
\[P(z,w)=\sum_{l,k=0}^{n}b_{lk}z^lw^k\] 
and  $m\leq -1$ or $j\leq -1$. Then  
\begin{align}
\nonumber a_{mj}(P)=&\sum_{l,k=0}^n b_{lk}\langle\zeta_1^l\zeta_2^k,{\zeta_1}^m{\zeta_2}^j
\rangle_{L^2(\mathbb{T}^2)}\\
\label{Eqn1}=&\frac{1}{(2\pi)^2}\int_{0}^{2\pi}\int_{0}^{2\pi}\sum_{l,k=0}^{n}b_{lk}
e^{i\theta_1l}e^{i\theta_2k}e^{-i\theta_1m}e^{-i\theta_2 j}d\theta_1 d\theta_2\\
\nonumber =&0.
\end{align}  

Next we will show that $a_{mj}(f)=0$ for $m\leq -1$ or $j\leq -1$. Without loss 
of generality, we suppose that $j\leq -1$.  Since $f(e^{i \theta_1},.)$ is 
holomorphic on $\mathbb{D}$, using Mergelyan's Theorem, there 
exists a sequence of holomorphic polynomials 
$\{P_{n,\theta_1}\}_{n\in \mathbb{N}}$ converging  to $f$ 
uniformly on $\overline{\mathbb{D}}$ as $n\rightarrow \infty$. 
Let us define $P_{n,\theta_1,r}(\xi)=P_{n,\theta_1}( r\xi)$ and 
$f_r(z,w)=f(z,rw)$ for $0<r<1$. Then $P_{n,\theta_1,r}\to f_r(e^{i\theta_1}, .)$ 
uniformly on $\overline{\mathbb{D}}$ as $n\rightarrow \infty$. 
As we have computed above in \eqref{Eqn1}, one can show that  
$a_{mj}(P_{n,\theta_1,r})=0$ for all $m\in \mathbb{Z},n\in \mathbb{N}$, 
and $0<r<1$.  So by taking limits as $n\rightarrow \infty$ we have 
$a_{mj}(f_r)=0$ for all $0<r<1$. Finally taking the limit as $r\to 1^-$ 
we conclude that $a_{mj}(f)=0$ for $j\leq -1$.  The proof for $m\leq -1$ is similar. 
Hence we have shown that  $a_{mj}(f)=0$ for $j\leq -1$ or $m\leq -1$.  

 We define 
\[G_N(z,w)=\sum_{0\leq m,j\leq N}\left(1-\frac{m}{N+1}\right)
\left(1-\frac{j}{N+1}\right)a_{mj}(f)z^{m}w^{j}.\] 
Since we have shown $G_N\equiv F_N$ on $\mathbb{T}^2$ , we have 
$\|G_N-f\|_{L^{\infty}(\mathbb{T}^2)}\rightarrow 0$ 
as $N\rightarrow \infty$. Since $(G_N-f)(e^{i\theta},w)$ 
is holomorphic in $w$ and $(G_N-f)(z,e^{i\theta})$ is holomorphic 
in $z$, using the Maximum Modulus Principle for holomorphic 
functions, we have  
\[\|G_N-f\|_{L^{\infty}(b\mathbb{D}^2)}\leq \|G_N-f\|_{L^{\infty}(\mathbb{T}^2)}.\]  
So $\|G_N-f\|_{L^{\infty}(b\Omega)}\rightarrow 0$ as $N\rightarrow\infty$. 
Then Lemma  \ref{Lem7} implies that $\|H_{G_N-f}\|_e\to 0$ as $N\to 
\infty$. Furthermore, $\|H_f\|_e=\|H_{G_n-f}\|_e$ as $H_{G_N}=0$.  
Therefore, we conclude that $\|H_f\|_e=0$. That is, $H_f$ is 
compact on  $A^2(\mathbb{D}^2)$.
\end{proof}

\begin{remark}
	Even though we stated the previous proposition on $\mathbb{D}^2$ the same proof, 
	with trivial modifications, works on products of two discs. 
\end{remark}

Now we are ready for the proof of the main result.  

\begin{proof}[Proof of Theorem \ref{Thm1}]
	First we will prove the sufficiency. Assume that 
	$H_{\phi}$ is compact on $A^2(\D)$. If  there is no non-trivial analytic disc in the 
	boundary of $\D$ then there is nothing to prove. So assume that 
	$\Delta =f(\mathbb{D})$ is a non-trivial disc in $b\D$ such that 
	$\phi\circ f$ is not holomorphic. Without loss of generality we may assume that   
	$\Delta$ is horizontal as the proof for vertical discs is similar. Let 	us fix 
	$(z_0,w_0)\in \Delta$ 	and define $\alpha_j=(j-1)/j$. Then one can check that 
	$\|(w-w_0)^{-\alpha_j}\|_{L^2(H)}\to \infty$ as $j\to \infty$.  Let us define 
	\[g_j(w)=\frac{a_j}{(w-w_0)^{\alpha_j}}\]
	 where $a_j=1/\|(w-w_0)^{-\alpha_j}\|_{L^2(H_{\D})}$. Then $\|g_j\|_{L^2(H_{\D})}=1$ 
	 for all $j$. Furthermore,  $g_j\rightarrow 0$ uniformly on any compact subset in $\D$ 
	 as $j\rightarrow \infty$.  
Without loss of generality, we assume that $\Delta$ is the largest horizontal disc in 
$b\D$ passing through $(z_0,w_0)$ and $\phi_0$ be a continuous function on $\C^2$ 
such that $\phi_0(z,w)=\phi(z,w_0)$ for all $(z,w)\in \D$. That is, $\phi_0$ is the 
extension of  $\phi|_{\Delta}$ to $\mathbb{C}$ in $z$.  
Since $\phi_0$ is not holomorphic (as a function of $z$) on $\Delta$ we have 
$H^{\Delta}_{\phi_0}(1)\neq 0$. That is, $\|H^{\Delta}_{\phi_0}(1)\|_{L^2(\Delta)}>0$.  
Then by Corollary \ref{Cor1}, there exists $\beta>0$ and $\delta>0$ such that if 
$w\in H_{\D}$ and $|w-w_0|<\delta$, then 
\[\|H^{\Delta_{w}}_{\phi_0}(1)\|_{L^2(\Delta_{w})}>\beta.\]  
Let us define  $K=\{w\in H_{\D}:|w-w_0|\leq \delta\}$.  Then
\begin{align*}
\int_{H_{\D}}|g_j(w)|^2\int_{\Delta_w}|H^{\Delta_w}_{\phi_0}(1)(z)|^2dV(z)dV(w)
&\geq \int_{K}|g_j(w)|^2\int_{\Delta_w}|H^{\Delta_w}_{\phi_0}(1)(z)|^2dV(z)dV(w)\\
&\geq \beta^2\|g_j\|^2_{L^2(K)}.
\end{align*}
However, since $\|g_j\|^2_{L^2(H_{\D})}=1$ for all $j$ and $g_j\to 0$ uniformly 
on any compact set away from $w_0$ we conclude that $\|g_j\|^2_{L^2(K)}\geq 1/2$ 
for large $j$. Therefore, for large $j$ we have 
\[\int_{H_{\D}}|g_j(w)|^2\int_{\Delta_w}|H^{\Delta_w}_{\phi_0}(1)(z)|^2dV(z)dV(w)
\geq \frac{\beta^2}{2}>0.\]  
Then Lemma \ref{Lem4}  and Lemma \ref{Lem5}  
imply that $\| H_{\phi_0}g_j\|^2_{L^2(\Omega)}$ does not converge to $0$ 
as $j\rightarrow \infty$.  This contradicts Lemma \ref{Lem3} as we have assumed that 
$H_{\phi}$ is compact.

Finally we will prove the necessity. We assume $\phi\in C(\overline{\Omega})$ is such 
that $\phi\circ f$ is holomorphic for any holomorphic function 
$f:\mathbb{D}\rightarrow b\Omega$. Furthermore, we assume that $\D$ is 
not the product of two discs as that case is covered in 
Proposition \ref{PropBidisc}.   
Lemma \ref{Lem6} implies that there exists a family of functions 
$\{\psi_n\}\subset C^{\infty}(\overline{\Omega})$ such that 
\begin{itemize}
\item[i.] $\psi_n\circ f$ is holomorphic for any $n$ and any holomorphic $f:\mathbb{D}\to b\D$,
\item[ii.] $\psi_n\rightarrow \phi$ uniformly on $\Gamma_{\D}$ as $n\to \infty$.
\end{itemize}
Lemma \ref{Lem8} implies that $H_{\psi_n}$ is compact and Lemma \ref{Lem7} implies that 
$\|H_{\phi-\psi_n}\|_e\rightarrow 0$ as $n\rightarrow \infty$. Therefore, 
\[\|H_{\phi}\|_e=\|H_{\phi}\|_e-\|H_{\psi_n}\|_e\leq \|H_{\phi-\psi_n}\|_e.\]  
This implies $\|H_{\phi}\|_e=0$, proving that $H_{\phi}$ is compact on $A^2(\Omega)$. 
\end{proof}

\section*{Acknowledgment}
We would like to thank Trieu Le and Yunus Zeytunucu for valuable comments 
on a preliminary version of this manuscript.

\end{document}